\definecolor{mygreen}{RGB}{28,172,0} 
\definecolor{mylilas}{RGB}{170,55,241}
\pgfplotsset{compat=1.14}
\newtheorem{theorem}{Theorem}
\newenvironment{theorem*}{\par\noindent{\bf Theorem\ }}{\hfill\\[2mm]}
\newtheorem{lemma}{Lemma}
\newenvironment{corollary*}{\par\noindent{\bf Corollary\ }}{\hfill\\[2mm]}
\newtheorem{definition}{Definition}
\newcommand{\Abf}{\boldsymbol{A}}
\newcommand{\Acal}{\mathcal{A}}
\newcommand{\bbf}{\boldsymbol{b}}
\newcommand{\cbf}{\boldsymbol{c}}
\newcommand{\Ccal}{\mathcal{C}}
\newcommand{\dbf}{\boldsymbol{d}}
\newcommand{\Dbf}{\boldsymbol{D}}
\newcommand{\Ebf}{\boldsymbol{E}}
\newcommand{\Ibf}{\boldsymbol{I}}
\newcommand{\Kcal}{\mathcal{K}}
\newcommand{\Lcal}{\mathcal{L}}
\newcommand{\Nbb}{\mathbb{N}}
\newcommand{\Ob}{\boldsymbol{O}}
\newcommand{\obf}{\boldsymbol{o}}
\newcommand{\pbf}{\boldsymbol{p}}
\newcommand{\Rbb}{\mathbb{R}}
\newcommand{\sbf}{\boldsymbol{s}}
\newcommand{\ubf}{\boldsymbol{u}}
\newcommand{\rbf}{\boldsymbol{r}}
\newcommand{\vbf}{\boldsymbol{v}}
\newcommand{\wbf}{\boldsymbol{w}}
\newcommand{\xbf}{\boldsymbol{x}}
\newcommand{\ybf}{\boldsymbol{y}}
\newcommand{\zbf}{\boldsymbol{z}}
\newcommand{\zerobf}{\boldsymbol{0}}
\definecolor{Fcolor}{rgb}{0, 0.5, 0.25}
\newif\ifcomment
\crefname{equation}{}{}
\crefname{table}{}{}%
\crefname{figure}{Figure}{Figures}
\crefname{algorithm}{Algorithm}{}
\crefname{table}{Table}{Tables}
\crefname{lemma}{Lemma}{Lemmas}
\crefname{theorem}{Theorem}{Theorems}
\crefname{section}{Section}{Sections}
\crefname{definition}{Definition}{Definitions}
\DeclareRobustCommand{\cev}[1]{%
	\mathpalette\do@cev{#1}%
}
\newcommand{\do@cev}[2]{%
	\fix@cev{#1}{+}%
	\reflectbox{$\m@th#1\vec{\reflectbox{$\fix@cev{#1}{-}\m@th#1#2\fix@cev{#1}{+}$}}$}%
	\fix@cev{#1}{-}%
}
\newcommand{\fix@cev}[2]{%
	\ifx#1\displaystyle
	\mkern#2 1mu
	\else
	\ifx#1\textstyle
	\mkern#2 3mu
	\else
	\ifx#1\scriptstyle
	\mkern#2 2mu
	\else
	\mkern#2 2mu
	\fi
	\fi
	\fi
}
\begin{document}
%
\title{Rapid Convergence of First-Order Numerical\\ Algorithms via Adaptive Conditioning}
%

\author{Muhammad Adil, Sasan Tavakkol, and Ramtin Madani
\thanks{Muhammad Adil and Ramtin Madani are with the University of Texas at Arlington. Sasan Tavakkol is with Google Research. This work is funded, in part, by the Office of Naval Research under award N00014-18-1-2186, and approved for public release under DCN\# 43-7474-20.}}

\maketitle

\IEEEpeerreviewmaketitle

\begin{abstract}
This paper is an attempt to remedy the problem of slow convergence for first-order numerical algorithms by proposing an adaptive conditioning heuristic. First, we propose a parallelizable numerical algorithm that is capable of solving large-scale conic optimization problems on distributed platforms such as {graphics processing unit}  with orders-of-magnitude time improvement. Proof of global convergence is provided for the proposed algorithm. We argue that on the contrary to common belief, the condition number of the data matrix is not a reliable predictor of convergence speed. In light of this observation, an adaptive conditioning heuristic is proposed which enables higher accuracy compared to other first-order numerical algorithms. Numerical experiments on a wide range of large-scale linear programming and second-order cone programming problems demonstrate the scalability and computational advantages of the proposed algorithm compared to commercial and open-source state-of-the-art solvers.
\end{abstract}

\section{Introduction}

Conic optimization is of practical interest in a wide variety of areas such as operation research, machine learning, signal processing and optimal control. 
For this purpose, interior point-based algorithms perform very well and have become the standard method of solving conic optimization problems \cite {Meh92,DCB13, TTT03}. Various commercial and open-source solvers such as MOSEK \cite{mosek}, GUROBI \cite{gurobi}, and SeDuMi \cite{sedumi} are based on interior point methods as their default algorithm. Although interior-point methods are robust and theoretically sound, they do not scale well for very large conic optimization programs. 
Computational cost, memory issues, and incompatibility with distributed platforms are among the major impediment for interior point methods in solving large-scale and practical conic optimization problems. 
 
In recent years, operator splitting methods such as Douglas-Rachford Splitting (DRS) \cite{EB92, GB17, FZB19,SMP19 } and Alternating Direction Method of Multipliers (ADMM) \cite{BPC+11,FB2018, OCPB16,  ZFP+20, SBGB2018, MKL18, MKL15} have received particular attention because of their potential for parallelization and ability to scale. 
First order methods are popular because {the} iterative steps are computationally cheap and easy to implement and thus ideal for large scale problems where high accuracy solutions are typically not required.  Operator splitting techniques, {on the other hand} can lead to parallel and distributed implementation and provide moderate accuracy solutions to conic programs in a relatively lower computational time. 

Motivated by the cheap per iteration cost and ability to handle large scale problems, several first order operator splitting algorithms have been proposed recently. Authors in \cite{OCPB16}, introduce a solver (SCS), a homogeneous self-dual embedding method based on ADMM to solve large convex cone programs and provide primal or dual infeasibility certificates when relevant.  A MATLAB solver CDCS \cite{ZFP+20} extended the homogeneous self-dual embedding concept \cite{OCPB16} and exploits the sparsity structure using chordal decomposition for solving large scale semidefinite programming problems.  The ADMM algorithm introduced in \cite{BPC+11} is improved by selecting the proximal parameter and pre-conditioning to introduce an open-source software package called POGS (Proximal Graph Solver) \cite{FB2018} and multiple practical problems are tested to evaluate the performance. Another application of operator splitting methods is provided in an open-source solver OSQP (operator splitting solver for quadratic programs) \cite{SBGB2018}, where operator splitting technique is applied to solve quadratic programs. Open-source Julia implemented conic operator splitting method (COSMO) \cite{CGG20}, solves the quadratic objective function under conic constraints. In \cite{FZB19}, a Python package Anderson accelerated Douglas-Rachford splitting (A2DR) is introduced to solve large-scale non-smooth convex optimization problems. Although these solvers scale very well as the dimension of the problem increases in different practical areas but suffers from slow convergence and do not perform well when the given problem is  ill-conditioned \cite{SMP19, TP19, XTH17}.

First order methods are considered very sensitive to condition number of problem data and parameter selection, and consequently have limitations in achieving higher accuracy within a reasonable number of iterations \cite{SBGB2018, GTS2015, NLR+15, GB17}.  Although first order operator splitting methods have been studied extensively in recent years for solving large scale conic programs for different applications but until the recent past, very few efforts are made to study the convergence rate \cite{HL2017, OPY+2020}.  
As an attempt to solve the convergence rate issues, recently serious efforts have been made to make first order algorithms more robust and  practical  for real-world applications \cite{TP19, WS17, NLR+15,EY15, GHY17, DZ16, BG16, BG18}.  A line search method is proposed in \cite{GB2016} to accelerate convergence. In \cite{Deng2016}, a global linear convergence proof is given under strict convexity and Lipschitz gradient condition on one function.  A global linear convergence approach and metric selection approach shown in \cite{GB17} under strong convexity and smoothness conditions. Researchers have proposed several acceleration techniques to expedite the convergence speed of ADMM. Adaptive penalty scheme is introduced in \cite{XTH17, CSV16} to automatically tune the penalty parameter. In \cite{YBJ18, JYO19}, Anderson acceleration (AA) is applied to improve the convergence of local-global solver and ADMM with application to geometry optimization and physics simulation problems. The authors in \cite{ZOB18}, applied the type-I variant of Anderson acceleration \cite{FS09} to splitting conic solver (SCS) \cite{OCPB16} to solve conic optimization problems and improved the terminal convergence. A new framework known as SuperSCS is introduced in \cite{SMP19} by combining SCS solver with original type-II AA to solve large cone problems and it is shown that the new approach performs better than the original SCS solver.  Type-II Anderson acceleration Douglas-Rachford splitting (A2DR) algorithm is proposed in \cite{FZB19}, to show the rapid convergence or provide infeasibility/unboundedness certificates. However, most of these techniques works reasonably well under limited scenarios, particular conditions, and for a very specific problem structures and yield no tangible benefits for any general class of problems. Improvements from these techniques are very limited and has very mild effect on the convergence due to the nature of accelerated algorithms. Moreover, these techniques {fail}  to achieve a higher accuracy.


 {Operator splitting methods heavily} rely on the input problem data matrices, pre-conditioning, solution polishing, and step size parameter selection \cite{ ZFP+20, NLR+15, FB2018}. Parameter selection for global convergence is still a challenge to be addressed \cite{SBGB2018, GB17}.  {Despite the scalability and computational advantages}, these methods suffer from slow terminal convergence, and {are} highly sensitive to problem condition number, hence, cannot be applied to many practical problems \cite{EY15, BG18, FZB19}. 
There is a dire need to {develop} a general purpose, and reliable first order algorithm that encapsulates the benefits of simple inexpensive iterations and scaling properties of first order algorithm, as well as providing the highly reliable and accurate solutions similar to that of interior point methods.


In this work, we first show that the {condition number of data matrices has} no significant effect on convergence of general first order methods and this is the major impediment for achieving highly accurate results with operator splitting methods. Furthermore, we propose a new operating splitting method where each iteration requires simple arithmetic operations, leads to parallel and distributive implementation, {scales} gracefully for very large cone programs  and {provides} a very accurate solutions which is beyond the reach of other first order solvers. Moreover, in conjunction with massively parallelizable and cheap iterative algorithm  we propose a heuristic policy to scale the data matrices in such a way that the combined algorithm ensures the global convergence and achieves a high accuracy within a tens of iterations.  In short, the proposed algorithm enjoys the benefits of first order algorithms such as low per iteration cost, scalability for very large problems, parallel and distributed implementations, and at the same time achieves the higher accuracy level of interior point methods. The major contributions and novelty of {this paper} are as follows

 \begin{enumerate}
 	\item We propose a highly scalable, simple iterative, and parallelizable first order algorithm for solving large conic optimization programs. 

    \item {We illustrate that a smaller condition number does not necessarily guarantee the faster convergence as the problem data matrices with a higher condition number can converge faster}. 
 	\item We propose a heuristic adaptive conditioning policy to obtain accurate solutions in comparison with  other first order algorithms and a proof is provided to guarantee the convergence of algorithm.
 	\item  We apply the proposed algorithm on {graphics processing unit (GPU)} to benefit the simple arithmetic operations in each iteration.
 	\item A wide range of tests are conducted on different conic programs and results are compared with several first order and interior point methods to justify the claims of scalability, efficiency and accuracy.   
 \end{enumerate}

The organization of the rest of this paper is as follows. Some preliminaries of cone programming and definitions are presented in section \ref{sec:pre}.  The effect of preconditioning and the need for proposed adaptive conditioning is illustrated in section \ref{sec:preconditioning} by  providing a numerical example. In section \ref{sec:adapt_cond}, we investigate the conditioning procedure to accelerate the convergence and provide an algorithm for adaptive conditioning. We compare the performance of proposed algorithm and adaptive conditioning in section \ref{sec:num_exp}, by solving a wide range of problems and comparing the results with commonly used solvers, and section \ref{sec:concl} concludes the paper. 

 \subsection{Notations}
Symbols $\Rbb$ and $\Nbb$ denote the set of real and natural numbers, respectively.  Matrices and vectors are represented by bold uppercase, and bold lowercase letters, respectively. Notation $\lVert \cdot {\rVert}_2$ refers to $\ell_2$ norm of either matrix or vector depending on the context and $\lvert \cdot\rvert$ represents the absolute value.   The symbol $\!(\cdot)^{\!\top}\!$ represent the  transpose operators. The notations $\boldsymbol{I}_n$  refer to the $n\times n$ identity  matrix.   The symbol $\Kcal$ is used to describe different types of cones used in this paper. The superscript $\!(\cdot)^{\!\mathrm{opt}}\!$ refers to the optimal solution of optimization problem. The notation $\!(\cdot)^{\dagger}$ denotes the  Moore–Penrose pseudoinverse of transpose of a matrix. The symbol $\Lcal$ represent the set of values to apply adaptive conditioning. The notation $\mathrm{diag}\{\cdots\}$ represent the diagonal elements of a diagonal matrix. The symbols $ \mathrm{SK}, \mathrm{AC}, \mathrm{CS}$, are used to refer Sinkhorn-Knopp, adaptive conditioning and competing solver, respectively. The symbols $\varepsilon^{\mathrm{abs}}$ and $\varepsilon^{\mathrm{rel}}$ are used for absolute and relative tolerance, respectively.

\section{Preliminaries  }\label{sec:pre}

In this paper, we consider the class of convex optimization problems with { a linear objective, } subject to a set of affine and second-order conic constraints. The primal formulation under study can be cast as:
\begin{subequations}\label{eq:prob_primal}
 	\begin{align}
  	& \underset{
  		\begin{subarray}{c} \!\!\!\! \!\!\! \! \xbf\in\,\Rbb^{n}
  		\end{subarray}
  	}{\text{minimize~~~}}
  	& &\hspace{-2cm} \cbf^{\top}\xbf  \label{eq:prob_obj}\\
  	& \text{subject to~~~}
  	& &\hspace{-2cm}  \Abf \xbf  = \bbf  \label{eq:prob_constraint}\\
  	& & & \hspace{-2cm} \xbf \in \Kcal  \label{eq:prob_cone}
  	\end{align}
\end{subequations}
where $\cbf \in \Rbb^{n},  \Abf \in \Rbb^{m\times n}$, and $ \bbf \in \Rbb^{m} $ are given and $\xbf\in\Rbb^n$ is the unknown optimization variable. Additionally, $\Kcal \triangleq\Kcal_{n_1}\times\Kcal_{n_2}\times\cdots\times\Kcal_{n_k}\subseteq\Rbb^n$, where each $\Kcal_{n_i}\subseteq\Rbb^{n_i}$ is a Lorentz cone of size $n_i$, i.e.,
\begin{align}
\Kcal_{n_i}\triangleq\big\{\wbf\in\Rbb^{n_i}\,|\,w_1\geq\big\|[w_2,\ldots,w_{n_i}]\big\|_2\big\},\nonumber
\end{align}
and $n_1+n_2+\ldots+n_k= n$.  
  
 The corresponding dual formulation of \eqref{eq:prob_primal} is 
\begin{subequations}\label{eq:prob_dual}
	\begin{align}
	& \underset{
		\begin{subarray}{c} \!\!\!\! \!\!\! \! \ybf \in \Rbb^m , \zbf \in \Rbb^n
		\end{subarray}
	}{\text{maximize~~~}}
	&&\hspace{-2cm} \bbf^{\top}\ybf  \label{eq:dual_obj}\\
	& \text{subject to~~~}
	&&\hspace{-2cm} \Abf^{\top}\ybf + \zbf = \cbf  \label{eq:dual_constraint}\\
	&&& \hspace{-2cm} \zbf  \in \Kcal \label{eq:dual_cone}
	\end{align}
\end{subequations}
where $\ybf$ and $ \zbf $ are dual variables associated with the constraints \eqref{eq:prob_constraint} and \eqref{eq:prob_cone}, respectively. 

 In this paper, we pursue a proximal numerical method inspired by Douglas-Rachford splitting \cite{EB92, GB17} to solve the class of optimization problems of the form \eqref{eq:prob_primal}. 
To this end, the projection and absolute value operators are defined as follows.   
\begin{definition}\label{def:abs}
	\vspace{1mm}
	For any proper cone $\Ccal \in \Rbb^n$, define the projection operator $\mathrm{proj}_{\Ccal}: \Rbb^n\to\Ccal$ as
	\begin{align*}
		\mathrm{proj}_{\Ccal}(\vbf)  \triangleq \mathrm{argmin}_{\ubf \in \Ccal} \;\;\|\ubf - \vbf\|_2.
	\end{align*}
	Additionally, define the absolute value operator $\mathrm{abs}_{\Ccal}: \Rbb^n\to\Rbb^n$  associated with $\Ccal$ as
	\begin{align*}
		\mathrm{abs}_{\Ccal}(\xbf_0) \triangleq 2 \mathrm{proj}_{\Ccal}(\xbf_0) -\xbf_0.
	\end{align*}
\end{definition}

\begin{algorithm}[t]
	\caption{~}
	\label{alg:1}
	\vspace{0.2mm}
	\begin{algorithmic}[1]
		\Require{ $(\Abf,\bbf,\cbf,\Kcal)$, fixed $\mu > 0$, and initial point $\sbf\in\Rbb^n$}
		\State $\Acal:=\mathrm{range}\{\Abf^{\!\top}\}\phantom{\Big|}$
		\State $\dbf := \Abf^{\dagger}\bbf + \dfrac{\mu}{2}\left(\mathrm{abs}_{\Acal}(\cbf)-\cbf\right) \phantom{\big|}$
		\Repeat
		\State $ \pbf \gets  \mathrm{abs}_{\Kcal}(\sbf)$
		\State $ \rbf \gets  \mathrm{abs}_{\Acal}(\pbf)\phantom{\Big|}$
		\State $\sbf \gets \dfrac{\sbf}{2} - \dfrac{\rbf}{2} + \dbf\phantom{\Big|}$
		\Until {stopping criteria is met.$\phantom{\Big|}$}
		\Ensure \!
		$\xbf
		\!\gets\!\dfrac{\pbf\!+\!\sbf}{2}$,\;\;
		$\zbf
		\!\gets\!\dfrac{\pbf\!-\!\sbf}{2\mu}$
	\end{algorithmic}\label{al:alg_1}
\end{algorithm}

Algorithm \ref{al:alg_1} details the proposed first-order numerical method for solving \eqref{eq:prob_primal}. 

\begin{theorem}\label{thm1}
Let $\{\sbf^l\}^{\infty}_{l=0}$ and $\{\pbf^l\}^{\infty}_{l=0}$ denote the sequence of vectors generated by Algorithm \ref{al:alg_1}. Then we have
\begin{align}
\lim_{l\to\infty} \dfrac{\pbf^l+\sbf^l}{2} = \bar{\xbf}\qquad\mathrm{and}\qquad
\lim_{l\to\infty} \dfrac{\pbf^l-\sbf^l}{2\mu} = \bar{\zbf}
\end{align}
where $\bar{\xbf}$ and $\bar{\zbf}$ are a pair of primal and dual solutions for problems \eqref{eq:prob_primal} and \eqref{eq:prob_dual}, respectively.
\end{theorem}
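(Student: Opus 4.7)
The plan is to recognize Algorithm~\ref{al:alg_1} as a Krasnoselskii--Mann (KM) fixed-point iteration and reduce the theorem to three steps: (i) nonexpansiveness of an underlying operator, (ii) a correspondence between its fixed points and primal-dual KKT pairs for \eqref{eq:prob_primal}--\eqref{eq:prob_dual}, and (iii) the classical KM convergence theorem.

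First, I would rewrite the $\sbf$-update as $\sbf^{l+1} = \tfrac{1}{2}\sbf^l + \tfrac{1}{2}\T(\sbf^l)$, where $\T(\sbf) := -\mathrm{abs}_{\Acal}(\mathrm{abs}_{\Kcal}(\sbf)) + 2\dbf$. Nonexpansiveness of $\T$ is then straightforward: because $\Acal = \mathrm{range}\{\Abf^{\!\top}\}$ is a linear subspace, $\mathrm{abs}_{\Acal} = 2\,\mathrm{proj}_{\Acal} - \Ibf_n$ is a linear isometry; because $\Kcal$ is a closed convex cone, $\mathrm{proj}_{\Kcal}$ is firmly nonexpansive and hence $\mathrm{abs}_{\Kcal} = 2\,\mathrm{proj}_{\Kcal} - \Ibf_n$ is nonexpansive. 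Composition, negation, and translation preserve nonexpansiveness, so $\T$ is nonexpansive.

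The hard part is establishing the correspondence between $\mathrm{Fix}(\T)$ and primal-dual solutions, which I would handle via two observations. First, $\Kcal$ is a product of self-dual Lorentz cones, so Moreau's decomposition yields $\sbf = \sbf_+ - \sbf_-$ with $\sbf_\pm \in \Kcal$ and $\sbf_+^{\!\top}\sbf_- = 0$, whence $\mathrm{abs}_{\Kcal}(\sbf) = \sbf_+ + \sbf_-$. Setting $\pbf = \mathrm{abs}_{\Kcal}(\sbf)$, $\xbf = (\pbf+\sbf)/2$, $\zbf = (\pbf-\sbf)/(2\mu)$ then gives $\xbf = \sbf_+ \in \Kcal$, $\mu\zbf = \sbf_- \in \Kcal$, and $\xbf^{\!\top}\zbf = 0$ automatically, so cone membership and complementary slackness hold for \emph{every} $\sbf$. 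Second, using $\mathrm{abs}_{\Acal}(\cbf) - \cbf = -2\,\mathrm{proj}_{\Acal^\perp}(\cbf)$, the constant simplifies to $\dbf = \Abf^{\dagger}\bbf - \mu\,\mathrm{proj}_{\Acal^\perp}(\cbf)$. Splitting the fixed-point equation $\mathrm{abs}_{\Acal}(\pbf) = 2\dbf - \sbf$ into its $\Acal$ and $\Acal^\perp$ components, and exploiting $\Abf^{\dagger}\bbf \in \Acal$ together with $\mathrm{null}(\Abf) = \Acal^\perp$, the $\Acal$-part collapses to $\mathrm{proj}_{\Acal}(\xbf) = \Abf^{\dagger}\bbf$, hence $\Abf\xbf = \bbf$ (primal feasibility), while the $\Acal^\perp$-part collapses to $\mathrm{proj}_{\Acal^\perp}(\zbf) = \mathrm{proj}_{\Acal^\perp}(\cbf)$, i.e., $\cbf - \zbf \in \Acal$, which yields a multiplier $\ybf$ with $\Abf^{\!\top}\ybf + \zbf = \cbf$ (dual feasibility). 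The reverse direction is a direct substitution from any KKT triple.

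Since the existence of a primal-dual optimal pair guarantees $\mathrm{Fix}(\T) \neq \emptyset$, the KM theorem applied with averaging parameter $1/2$ to the nonexpansive $\T$ ensures $\sbf^l \to \bar{\sbf} \in \mathrm{Fix}(\T)$. Continuity of $\mathrm{abs}_{\Kcal}$ then gives $\pbf^l = \mathrm{abs}_{\Kcal}(\sbf^l) \to \bar{\pbf} := \mathrm{abs}_{\Kcal}(\bar{\sbf})$, and the two stated limits follow from the output definitions $\bar{\xbf} = (\bar{\pbf}+\bar{\sbf})/2$ and $\bar{\zbf} = (\bar{\pbf}-\bar{\sbf})/(2\mu)$ combined with the fixed-point characterization above.
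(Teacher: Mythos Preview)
Your proposal is correct and follows essentially the same route as the paper: both recognize the $\sbf$-update as a $\tfrac{1}{2}$-averaged iteration of a nonexpansive map built from $\mathrm{abs}_{\Acal}\circ\mathrm{abs}_{\Kcal}$, establish the bijection between its fixed points and KKT pairs via Moreau decomposition and the $\Acal/\Acal^{\perp}$ splitting of the fixed-point equation, and conclude convergence by Fej\'er monotonicity. The only difference is packaging: you invoke the Krasnoselskii--Mann theorem as a black box, whereas the paper re-derives its ingredients by hand (the norm identity and inner-product inequality for $\mathrm{abs}_{\Acal}\circ\mathrm{abs}_{\Kcal}$ in Lemma~1, and the explicit Fej\'er/asymptotic-regularity computation in Lemma~3).
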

\begin{proof}
Please see the Appendix for the proof.
\end{proof}

Despite its potential for massive parallelization {for both CPU and GPU architectures}, in and of itself, Algorithm \eqref{alg:1} may not offer any advantages over the common-practice Douglas-Rachford splitting (DR) and the Alternating Direction Method of Multipliers (ADMM). However, as we will demonstrate next, Algorithm \eqref{alg:1} enables us to perform adaptive conditioning to achieve much faster convergence speed in comparison with the state-of-the-art pre-conditioning methods.

\section{State of the Art Preconditioning Methods}\label{sec:preconditioning}

One of the major drawbacks of first-order numerical methods is their sensitivity to the problem conditioning \cite{GB2015, FB2018,SBGB2018 }. Hence, it is common practice to reformulate problem \eqref{eq:prob_primal} with respect to new parameters
\begin{align}
\hat{\Abf}\triangleq\Dbf\Abf\Ebf,\quad
\hat{\bbf}\triangleq\Dbf\bbf,\quad \mathrm{and}\quad
\hat{\cbf}\triangleq\Ebf\cbf
\end{align}
and new proxy variables
\begin{align}
\hat{\xbf}=\Ebf^{-1}\xbf \quad\mathrm{and}\quad \hat{\zbf}=\Ebf^{\top}\zbf 
\end{align}
where $\Dbf \in \Rbb^{m \times m}$ and $\Ebf \in \Rbb^{n\times n}$ are tuned to improve convergence speed. The process of finding an appropriate $\Dbf$ and $\Ebf$ to improve the performance of a first-order numerical algorithm is regarded as preconditioning of data. 

Theoretical and practical evidence show that choices of $\Dbf$ and $\Ebf$ that result in smaller condition number for $\hat{\Abf}$ lead to better performance in both precision and convergence rate of first-order numerical algorithms \cite{GB14,GB2014a,GB2015,GTS2015}. As a result, over the past decade, several research directions have pursued preconditioning methods such as heuristic diagonal scaling with the aim of reducing the condition number of  $\hat{\Abf}$ \cite{GB14,PC11}. To this end, a number of  matrix equilibration heuristics  such as Sinkhorn-Knopp and Ruiz methods have been proposed in \cite{FB2018,ZFP+20,DB2017,SBGB2018} that indirectly influence the condition number of $\hat{\Abf}$ by equalizing $\ell_p$ norm for each row through diagonal choices of $\Dbf$ and $\Ebf$.

In this paper, we pursue an alternative approach. We argue that the condition number of $\hat{\Abf}$ is not a reliable indicator of convergence speed for first-order numerical methods and instead, we offer a new approach regarded as {\it adaptive {conditioning}}. Before elaborating the details of the proposed {procedure}, we first give a simple illustrative example through which it is shown that {a} smaller condition number for the data matrix $\hat{\Abf}$ does not necessarily result in better performance. 


\begin{figure*}[t]
	\subfloat[\label{fig:dr}]{\includegraphics[width= 0.333\textwidth]{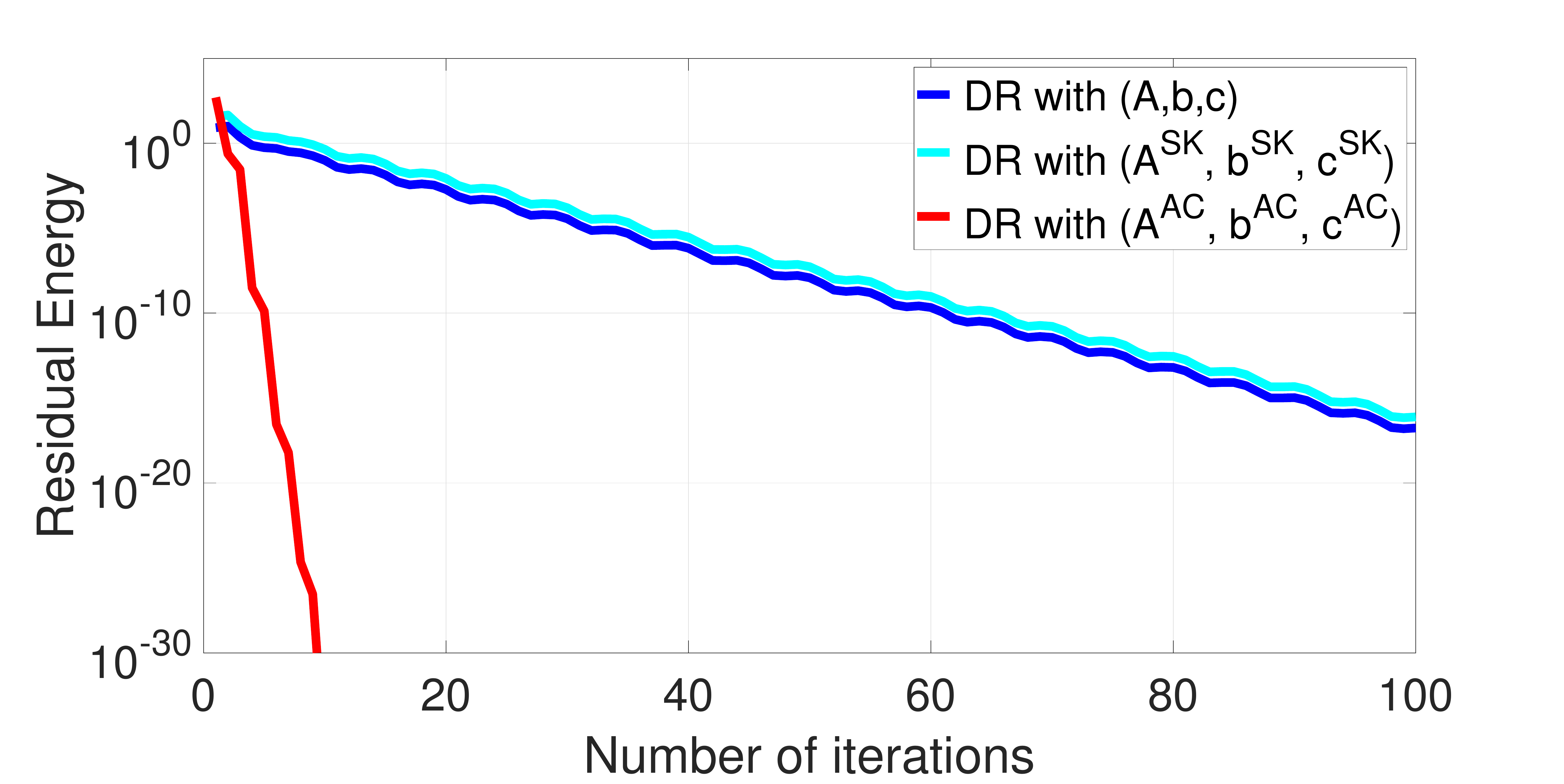}}	
	\subfloat[\label{fig:admm}]{\includegraphics[width= 0.333\textwidth]{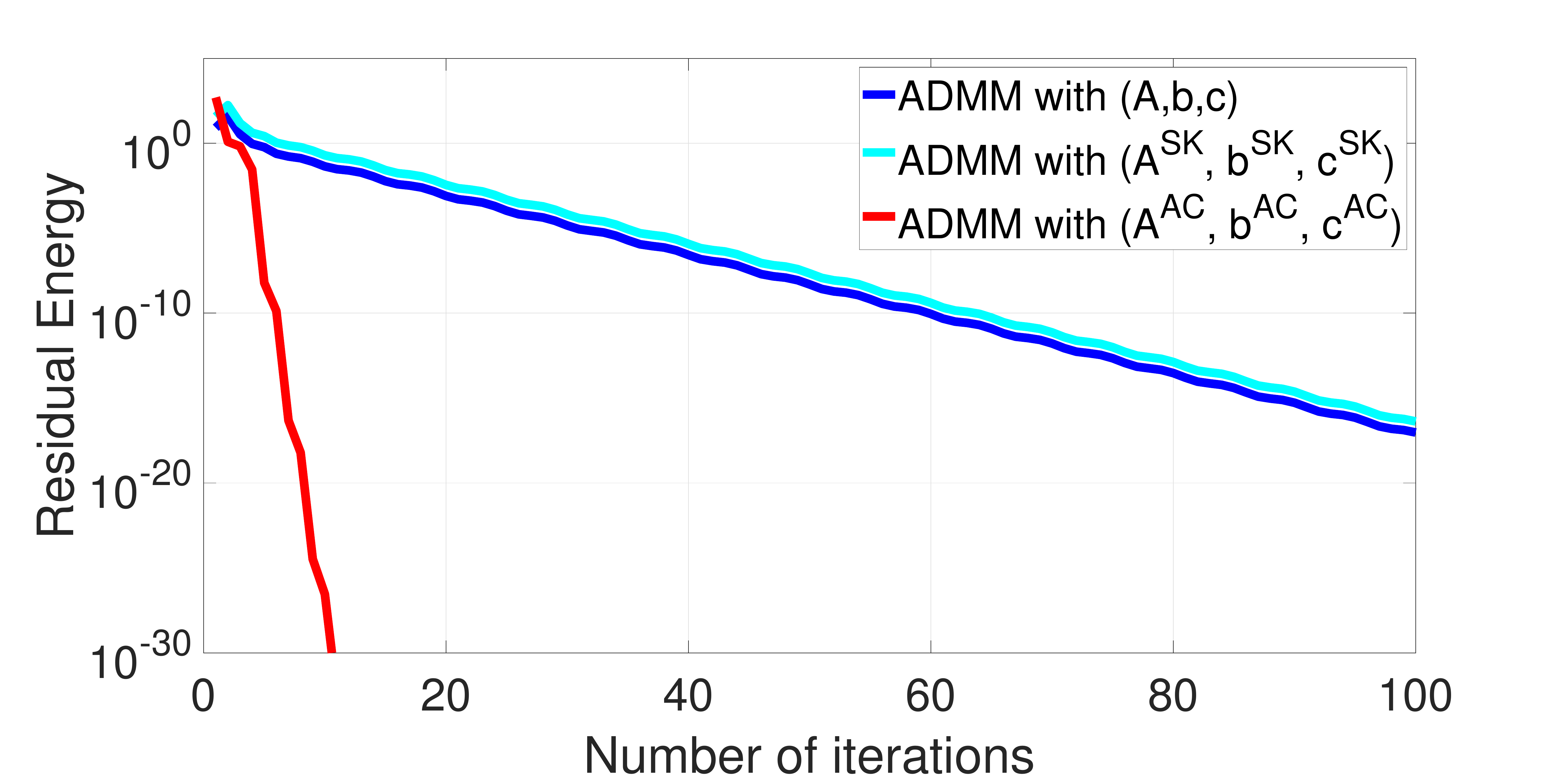}}
	\subfloat[\label{fig:alg1}]{\includegraphics[width= 0.333\textwidth]{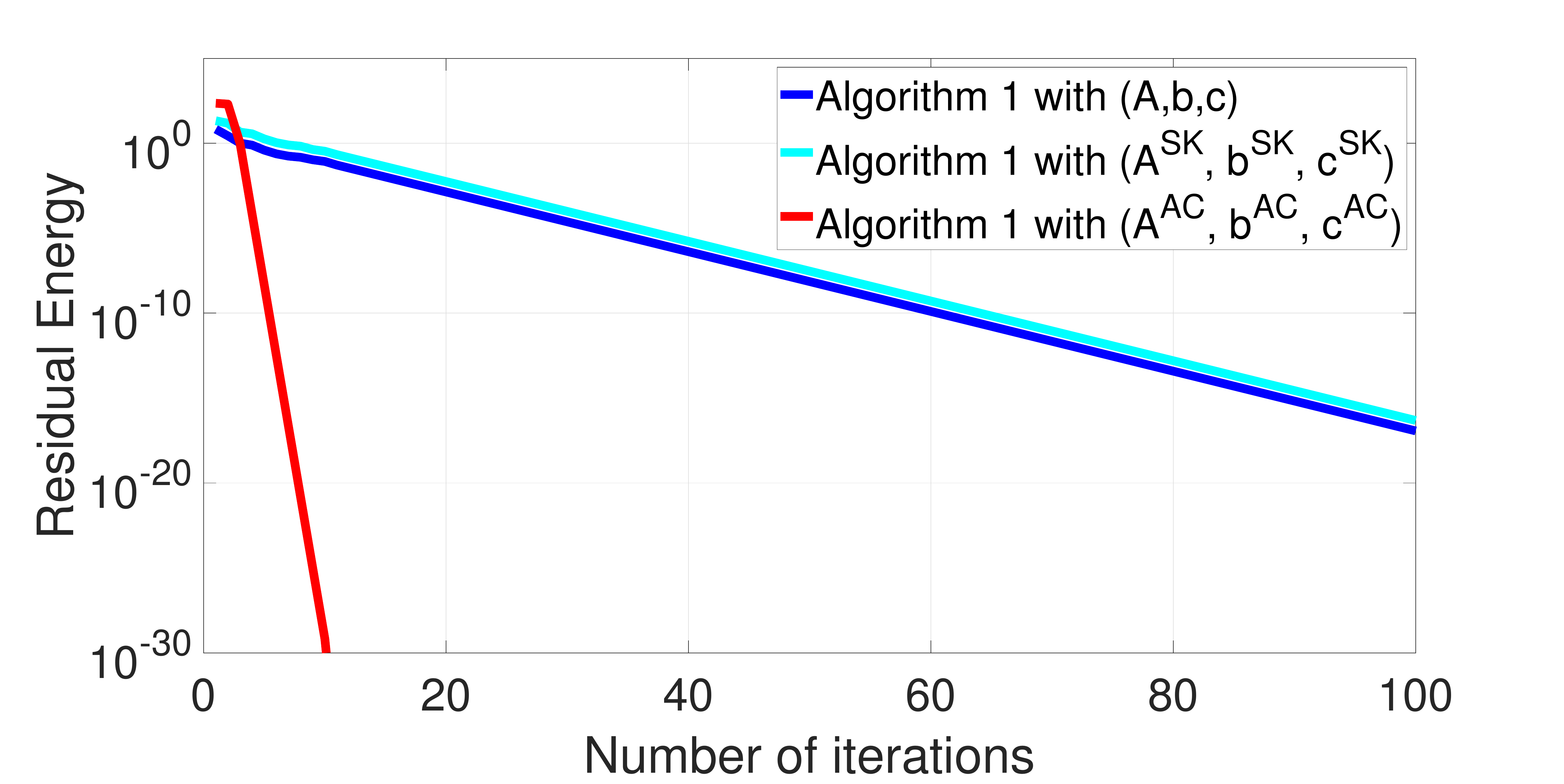}}
	\caption{The effect of different pre-conditioning methods on the convergence of (a) Douglas-Rachford splitting, (b) Alternating Direction Method of Multipliers, and (c) Algorithm \ref{al:alg_1}.}
	\label{fig:algorithm_comparison}
\end{figure*}


\subsection{Example: The effect of condition number}
In this example, we provide simple data matrices and compare the effect of different pre-conditioning methods on the convergence of Algorithm \eqref{al:alg_1}, DR splitting, and ADMM. The goal is to demonstrate {that} the condition number of $\hat{\Abf}$ is not a reliable predictor of {the} convergence speed. 

Consider the following data matrices:
\begin{subequations}
\begin{align*} 
	\Abf &:= \begin{bmatrix} 3.57  &  3.45 &   3.33 &   64.24 &    -72.76 \\
		3.45  &  3.33 &   3.23 &   95.14 &    -23.34 \\
		3.33  &  3.23 &   3.13 &   93.53 &    -17.43 \\
	\end{bmatrix},\\
	\bbf &:= \begin{bmatrix}
		-10.44 &20.65 &22.94
	\end{bmatrix}^{\top}, \\
	\cbf &:= \begin{bmatrix}
		0.37 &1.93 & -0.12 &-0.38 & 1.01
	\end{bmatrix}^{\top},
\end{align*}
\end{subequations}
and $\Kcal:=\Rbb_+^5$. The corresponding diagonal matrices obtained from regularized Sinkhorn-Knopp algorithm \cite{FB2018} for $\ell_2$ {norms} are
\begin{subequations}
\begin{align*}\small
&\Dbf^{\mathrm{SK}}=\mathrm{diag}\{[0.0217, 0.0215, 0.0222]\},\\
&\Ebf^{\mathrm{SK}}=\mathrm{diag}\{[0.4722, 0.4722, 0.4722, 0.4722, 0.4722]\},
\end{align*}
\end{subequations}
while the proposed heuristic adaptive conditioning results in the following matrices
\begin{subequations}
	\begin{align*}\small
		&\Dbf^{\mathrm{AC}}\!=\!\Ibf_{3\times 3},\\
		&\Ebf^{\mathrm{AC}}\!=\!\mathrm{diag}\{[0.0792, 0.0884, 14.5484, 292.9524, 316.2179]\}.
	\end{align*}
\end{subequations}
Define
\begin{align*}
\hat{\Abf}^{\mathrm{SK}}:=\Dbf^{\mathrm{SK}}\Abf \Ebf^{\mathrm{SK}}\quad\mathrm{and}\quad
\hat{\Abf}^{\mathrm{AC}}:=\Dbf^{\mathrm{AC}}\Abf \Ebf^{\mathrm{AC}}.
\end{align*}
In this case, the condition numbers of  $\Abf$, $\hat{\Abf}^{\mathrm{SK}}$, and $\hat{\Abf}^{\mathrm{AC}}$ are equal to $2046.4$, $2044.38$, and $72079.13$, respectively.

\subsubsection{Douglas-Rachford Splitting}
In order to implement the DR splitting method, it is common practice to cast problem \eqref{eq:prob_primal} in the form of
\begin{align}
	&{\text{minimize}}
	&&\hspace{-2cm} f(\xbf) + g(\xbf)  \label{eq:split_opt}
\end{align}
where $f,g:\Rbb^n\to\Rbb\cup\{\infty\}$ 
are defined as
\begin{align}
	f(\xbf)\! \triangleq\!  \left\{\begin{matrix} 0 & \!\!\mathrm{if }\  \xbf \in \Kcal \\
		\infty & \!\!\mathrm{ \ otherwise} \end{matrix}\right.  
	\quad\mathrm{and}\quad
	g(\xbf) \!\triangleq\!  \left\{\begin{matrix} \cbf^{\top}\xbf & \!\!\mathrm{ if }\ \Abf\xbf=\bbf \\ \infty & \!\!\mathrm{otherwise} \end{matrix}\right.  \nonumber 
\end{align}
leading to the following steps:
\begin{subequations}
\begin{align}
	&\xbf\gets\mathrm{prox}_f(\zbf)\\
	&\zbf\gets\zbf+\mathrm{prox}_g(2\xbf-\zbf)-\xbf.
\end{align}
\end{subequations}

\subsubsection{Alternating Direction Method of Multipliers} A standard way of solving problem \eqref{eq:prob_primal} via ADMM is through the formulation
\begin{subequations}
\begin{align}
& \underset{
	\begin{subarray}{c} \!\!\!\! \!\!\! \! \xbf_1,\xbf_2 \in \Rbb^n
	\end{subarray}
}{\text{minimize~~~}}
&&\hspace{-2cm} f(\xbf_1) + g(\xbf_2) \\
& \text{subject to~~~}
&&\hspace{-2cm} \xbf_1=\xbf_2
\end{align}
\end{subequations}
leading to the steps
\begin{subequations}
\begin{align}
	&\xbf_1 \gets \mathrm{prox}_{\mu^{-1}f}(\xbf_2-\mu^{-1}\zbf)\\
	&\xbf_2 \gets \mathrm{prox}_{\mu^{-1}g}(\xbf_1+\mu^{-1}\zbf)\\
	&\zbf\gets\zbf+\mu(\xbf_1-\xbf_2).
\end{align}
\end{subequations}
where $\mu$ is a fixed tuning parameter.

Figure \eqref{fig:algorithm_comparison} presents the outcome of DR splitting, ADMM, and Algorithm \eqref{al:alg_1}, respectively, with $\mu=1$ and different pre-conditioning methods. The three cases of no preconditioning, Sinkhorn-Knopp preconditioning, and adaptive conditioning are illustrated in each figure. As demonstrated in Figure \ref{fig:algorithm_comparison}, a lower condition number for the data matrix does not necessarily result in a faster convergence. Motivated by this observation, the following section presents the proposed adaptive conditioning procedure.

\begin{algorithm}[t]
	\caption{~}
	\label{alg:O}
	\vspace{0.2mm}
	\begin{algorithmic}[1]
		\Require $(\Abf,\bbf,\cbf,\Kcal)$, fixed $\mu\!>\!0$, initial points $\xbf\in\Rbb^n$ and $\zbf\in(\Rbb\setminus\{0\})^n$, fixed $0<t<1$, and $\Lcal\subseteq\Nbb$
		\State $l \gets 0$
		\Repeat
		\State $l \gets l+1$
		\If {$l\in\Lcal\cup\{1\}$}
		\For{$i=1,\ldots,k$}
		\State $h\gets n_1+\ldots+n_{i-1}$
		\For{$j=h+1,\ldots,h+n_{i}$}
		\Statex\vspace{-3.5mm}
		\State $\obf_{j}\!\gets\! \big|x_{h+1}\!-\!\|[x_{h+2},\ldots,x_{h+n_i}]\|_2\big|/|z_{h+1}|$
		\Statex\vspace{-3.5mm}
		\EndFor
		\EndFor
		\State $\Ob \gets \mathrm{diag}\{|\obf|\}^{\min\left\{1,\frac{t}
			{\log(\max\{\obf\})-\log(\min\{\obf\})}\right\}}$
		\State $\hat{\Acal}\gets\mathrm{range}\{(\Abf\Ob)^{\!\top}\}\phantom{\Big|}$
		\Statex\vspace{-2.0mm}
		\State $\dbf \gets (\Abf\Ob)^{\dagger}\bbf +\dfrac{\mu}{2}\left(\mathrm{abs}_{\Ob^{\top}\Acal}(\Ob^{\top}\cbf)-\Ob^{\top}\cbf\right) \phantom{\big|}$
		\Statex\vspace{-2.0mm}
		\State $\sbf \gets \Ob^{-1}\xbf-\mu\Ob\zbf$
		\EndIf
		\State $ \pbf \gets  \mathrm{abs}_{\Ob^{-1}\Kcal}(\sbf)$
		\State $ \rbf \gets  \mathrm{abs}_{\hat{\Acal}}(\pbf)\phantom{\Big|}$
		\State $\sbf \gets \dfrac{\sbf}{2} - \dfrac{\rbf}{2} + \dbf\phantom{\Big|}$
		\Statex
		\State $\xbf\!\gets\!\dfrac{\Ob(\pbf\!+\!\sbf)}{2}$
		\Statex
		\State $\zbf\!\gets\!\dfrac{\Ob^{-1}(\pbf\!-\!\sbf)}{2\mu}$
		\Until {stopping criteria is met.$\phantom{\Big|}$}
		\Ensure \!
$
\xbf$\;\; and\;\;
$
\zbf$
	\end{algorithmic}\label{al:alg_O}
\end{algorithm}

\begin{figure*}[t]
	\subfloat[\label{fig:no_cond}]{\includegraphics[width= 0.333\textwidth]{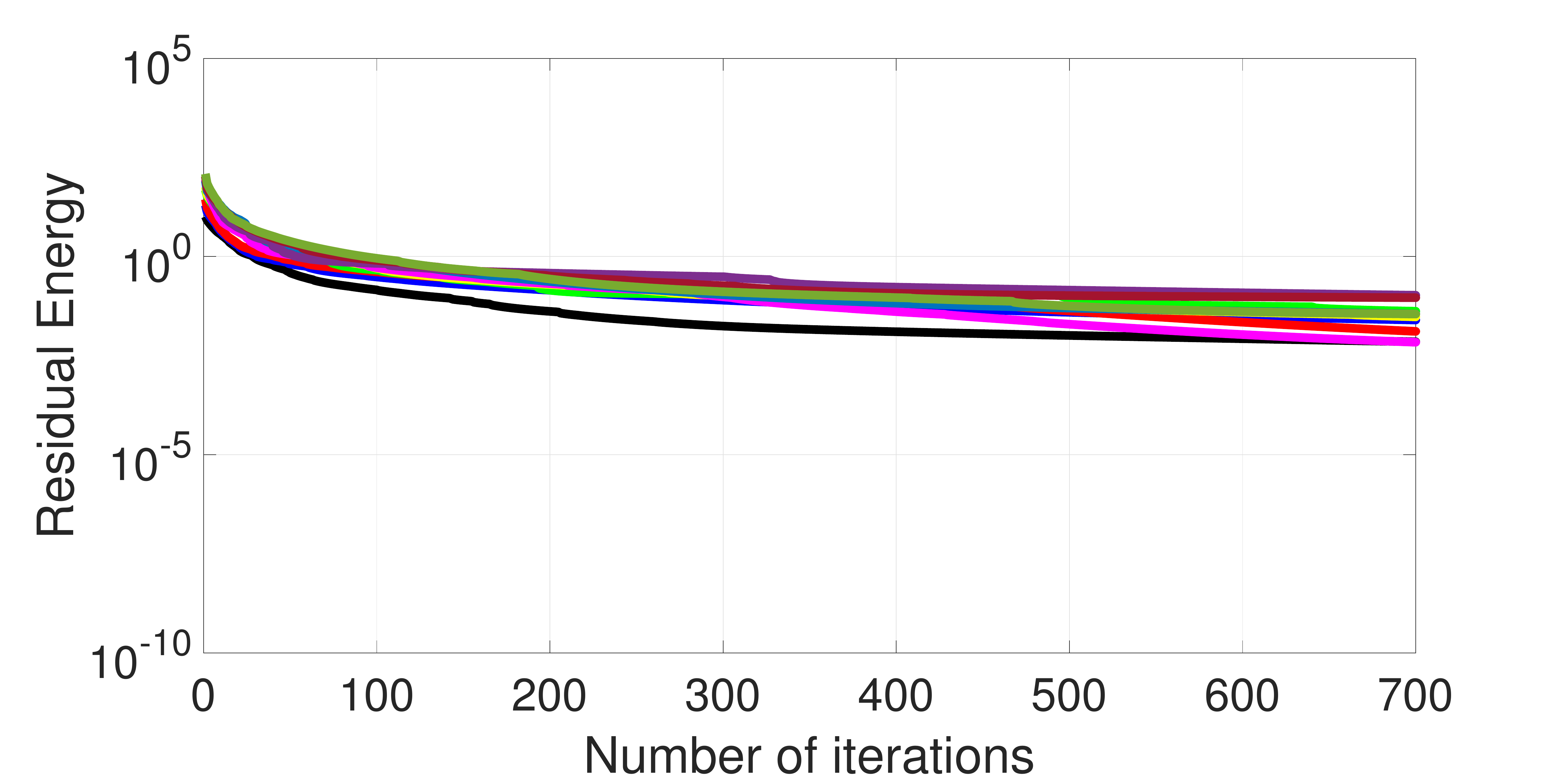}}
	\subfloat[\label{fig:one_cond}]{\includegraphics[width= 0.333\textwidth]{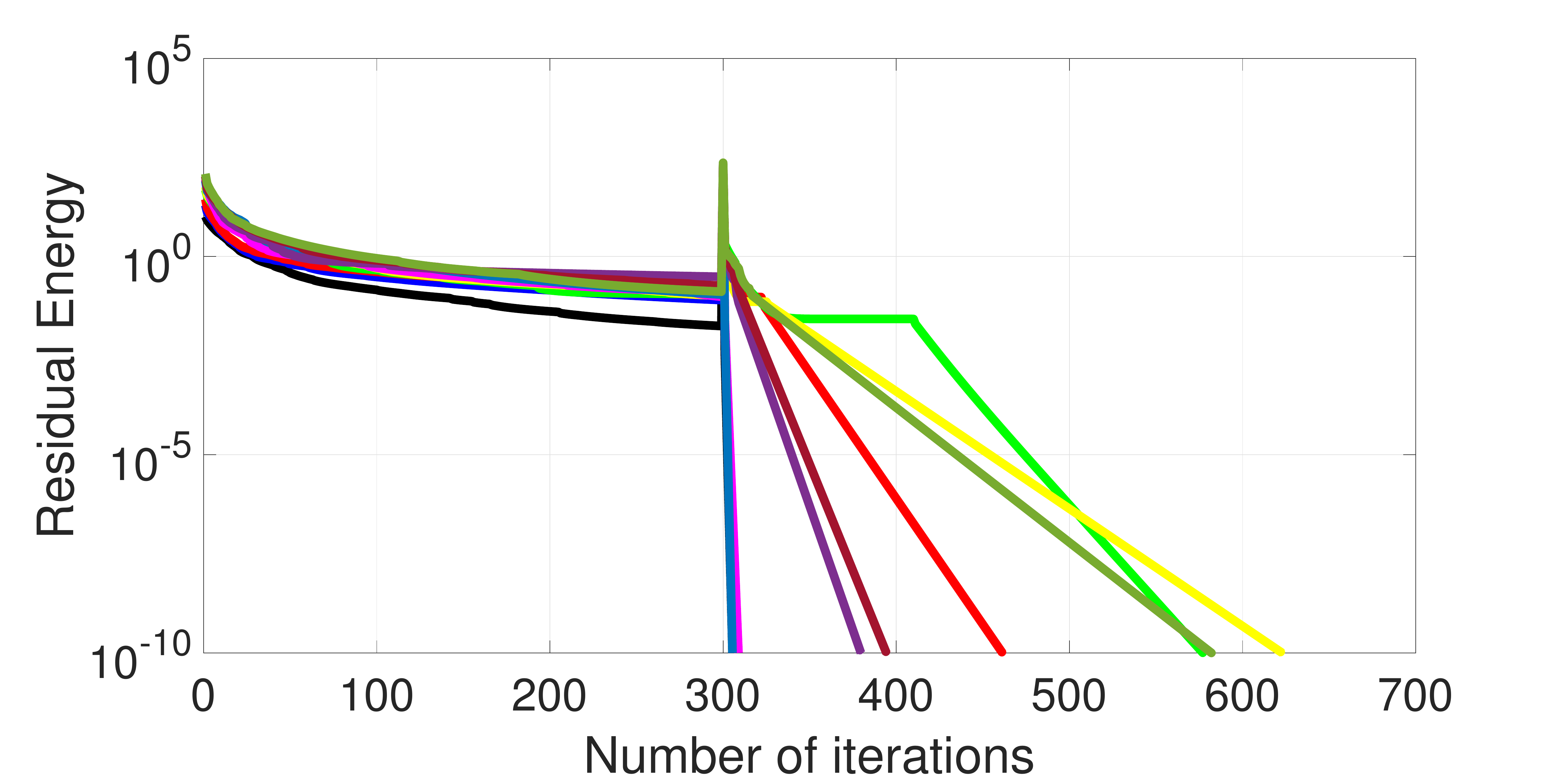}}
	\subfloat[\label{fig:multiple_cond}]{\includegraphics[width= 0.333\textwidth]{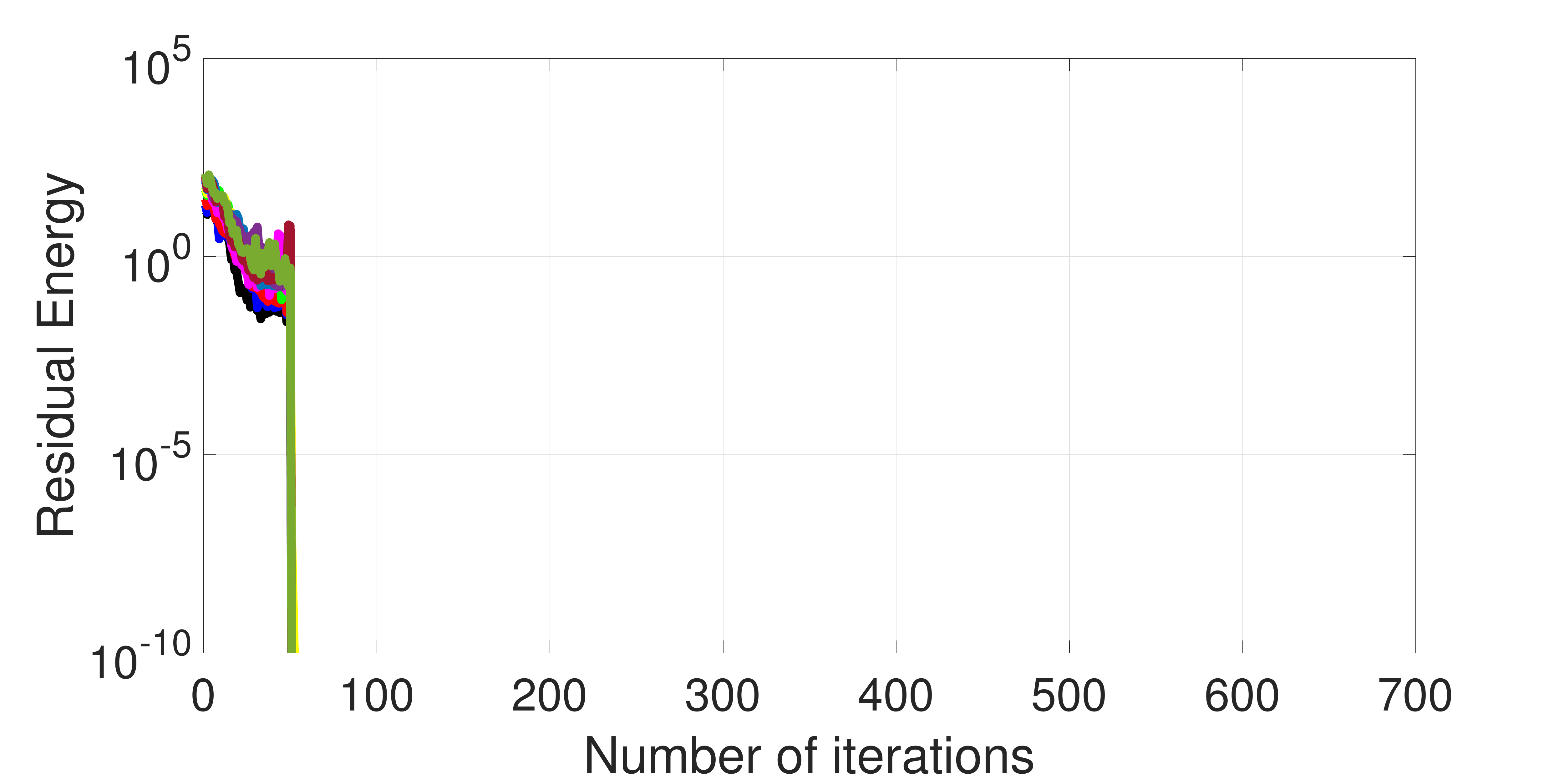}}
	\caption{Convergence of Algorithm \eqref{al:alg_O} for 10 random linear programming instance {(distinct color for each instance)} with different conditioning steps:
		(a) No conditioning, i.e., $\Lcal:=\varnothing$,
		(b) One time conditioning at iteration $l=300$, i.e., $\Lcal:=\{300\}$, and
		(c) {Continuous} conditioning at the first $50$ iterations, i.e., $\Lcal\!:=\!\{1,2,\ldots,50\}$.}
	\label{fig:lp_scaling}
\end{figure*}

\section{Adaptive Conditioning}\label{sec:adapt_cond}
In this work, we rely on post multiplication of the data matrix $\Abf$ by a diagonal positive-definite matrix $\Ob$, to enhance the convergence speed of Algorithm \ref{al:alg_1}. The primal problem \eqref{eq:prob_primal} is reformulated as:
\begin{subequations}
	\begin{align}
		& \underset{
			\begin{subarray}{c} \!\!\!\! \!\!\! \! \hat{\xbf}\in\,\Rbb^{n}
			\end{subarray}
		}{\text{minimize~~~}}
		& &\hspace{-2cm} (\Ob^{\top}\cbf)^{\top}\hat{\xbf}  \\
		& \text{subject to~~~}
		& &\hspace{-2cm}  (\Abf\Ob) \hat{\xbf}  = \bbf  \\
		& & & \hspace{-2cm} \hat{\xbf} \in \Ob^{-1}\Kcal  
	\end{align}
\end{subequations}
and the dual problem \eqref{eq:prob_dual} as:
\begin{subequations}
	\begin{align}
		& \underset{
			\begin{subarray}{c} \!\!\!\! \!\!\! \! \ybf \in \Rbb^m , \hat{\zbf} \in \Rbb^n
			\end{subarray}
		}{\text{maximize~~~}}
		&&\hspace{-1cm} \bbf^{\top}\ybf  \\
		& \text{subject to~~~}
		&&\hspace{-1cm} (\Abf\Ob)^{\top}\ybf + \hat{\zbf} = \Ob^{\top}\cbf  \\
		&&& \hspace{-1cm} \hat{\zbf} \in \Ob^{\top}\Kcal^{\ast}
	\end{align}
\end{subequations}  
where
\begin{align}
	&\hat{\xbf}\triangleq\Ob^{-1}\xbf\quad\mathrm{and}\quad
	\hat{\zbf}\triangleq\Ob^{\top}\zbf
\end{align}
are proxy variables.

In contrary to the existing practice that focuses on the condition number of the data matrix, we continuously update the matrix $\Ob$ according to a prespecified policy to improve the convergence speed. This heuristic procedure is detailed in Algorithm \ref{al:alg_O}. As illustrated in Figure \ref{fig:illustration}, the intuitive reason behind the proposed adaptive conditioning is to equalize the rate of convergence for elements of

\begin{itemize}
	\item {\bf Step 4:} Adaptive conditioning can be done based on a user-defined criteria or in the simplest case, at a set of user-defined iterations $\Lcal$.
	\item {\bf Step 5 and 11:} New coefficients are calculated for each cone to equalize the speed of convergence for the elements of $\xbf$ and $\zbf$. Note that since the vectors $\xbf$ and $\zbf$ are complementary at optimality, 
the elements of $\obf$ can be very large or very small numbers and that is the motivation behind the normalization step 11.
\item {\bf Steps 12 and 13:} These two steps are concerned with the adjustments of the proximal operators and the vector $\dbf$, respectively.
\item {\bf Step 14:} This step casts the vector $\sbf$ into the new space so that current progress is continued.
\end{itemize}

\begin{figure}[t]
	\includegraphics[width= \columnwidth]{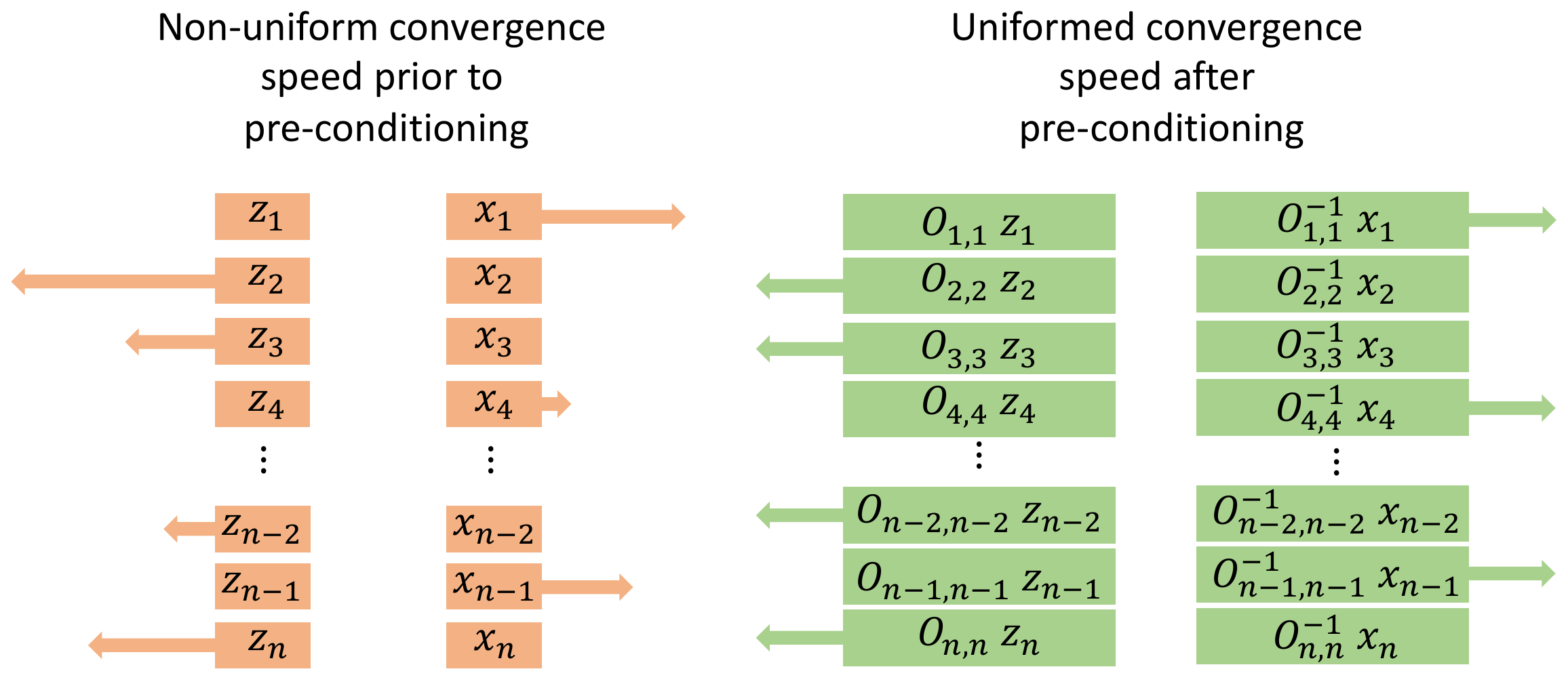}
	\caption{Intuitive reason behind adaptive conditioning { to equalize the convergence speed}}
	\label{fig:illustration}
\end{figure}

The next example demonstrates the effectiveness of the proposed adaptive conditioning approach on random instances of linear programming (LP).

\subsection{Example: The choice of conditioning steps}
This case study is concerned with the effect of conditioning steps on the convergence behavior of Algorithm \eqref{al:alg_O}. We consider three cases:
\begin{itemize}
\item No conditioning, i.e., $\Lcal:=\varnothing$,
\item One time conditioning at iteration $l=300$, i.e., $\Lcal:=\{300\}$,
\item {Continuous} conditioning at the first $50$ iterations, i.e., $\Lcal\!:=\!\{1,2,\ldots,50\}$.
\end{itemize}
We generated $10$ random instances of linear programming with $100$ variables and $80$ linear constraints  whose data are chosen such that:
\begin{itemize}
\item The elements of $\Abf\in\Rbb^{80\times 100}$ have i.i.d standard normal distribution.
\item $\bbf: =\Abf\dot{\xbf}$ where the elements of $\dot{\xbf}\in\Rbb^{100}$ have i.i.d uniform distribution from the interval $[0,1]$.
\item The elements of $\cbf\in\Rbb^{100}$ have i.i.d standard normal distribution.
\item And $\Kcal=\Rbb_+^{100}$.
\end{itemize}
The effect of adaptive conditioning proposed in Algorithm \ref{al:alg_O} for $t = 9.2$ is illustrated in Figure \ref{fig:lp_scaling} for all 10 random instance. As demonstrated in the figure, even a one time adaptive conditioning results in significant improvement of convergence speed.  

\section{Numerical Experiments}\label{sec:num_exp}
In this section we provide case studies to evaluate the performance of Algorithm \ref{alg:O} on both CPU and GPU platforms in comparison with the state-of-the-art commercial solvers MOSEK \cite{mosek}, GUROBI  \cite{gurobi} as well as the open source software OSQP  \cite{SBGB2018} and POGS \cite{FB2018}. Our case studies consist of randomly generated linear programming (LP) and second-order cone programming (SOCP) problems. We conduct experiments on problems with a wide range of variable and constraint numbers {to} assess both scalability and speed. Additionally, we consider different values for infeasiblity/gap tolerance, { to } assess the solution accuracy of Algorithm \ref{alg:O}. 
{The proposed algorithm and competing solvers are implemented in MATLABR2020a and all the simulations are conducted on a }  DGX station with 20 2.2 GHz cores, Intel Xeon E5-2698 v4 CPU, with NVIDIA Tesla V100-DGXS-32GB (128 GB total) GPU processor and 256 GB of RAM. { The parallel nature of algorithm enables the implementation to take advantage of multi-core CPU processing. Note that our implementation of proposed algorithm in MATLAB utilizes only a single GPU and does not benefit from multiple GPU's of the platform. Moreover, all experiments reported in this paper are not bounded by RAM or GPU memory of DGX station.} We used the MATLAB interface of OSQP v0.6.0, MOSEK v9.2.5 and GUROBI v9.0.

In all of the experiments, the stopping criteria of Algorithm \ref{alg:O} is when it {exceeds} both primal and dual feasibility of the solution produced by the competing solver. In other words, when the following two criteria are met:  
\begin{subequations}\label{stopcri}
\begin{align}
&\|\Abf \xbf-\bbf\|_2 < \|\Abf \xbf^{\text{CS}}-\bbf\|_2\\
&|(\Abf^{\dagger}\bbf)^{\top}(\cbf-\zbf) - \cbf^{\top}\xbf|<|\bbf^{\top}\ybf^{\text{CS}} - \cbf^{\top}\xbf^{\text{CS}}|
\end{align}
\end{subequations}
where $\xbf^{\text{CS}}$ and $\ybf^{\text{CS}}$ are primal and dual solutions produced by the competing solver under default settings. In each figure, the experiments are continued until the run time of the competing solver reached a maximum time of 1200 seconds. { The maximum time is chosen in such a way that the experiments provide sufficient information to compare the computational time for all solvers. }

\begin{figure*}[t] 
	\subfloat[\label{fig:lp_osqp}] {\includegraphics[width =0.333\textwidth]{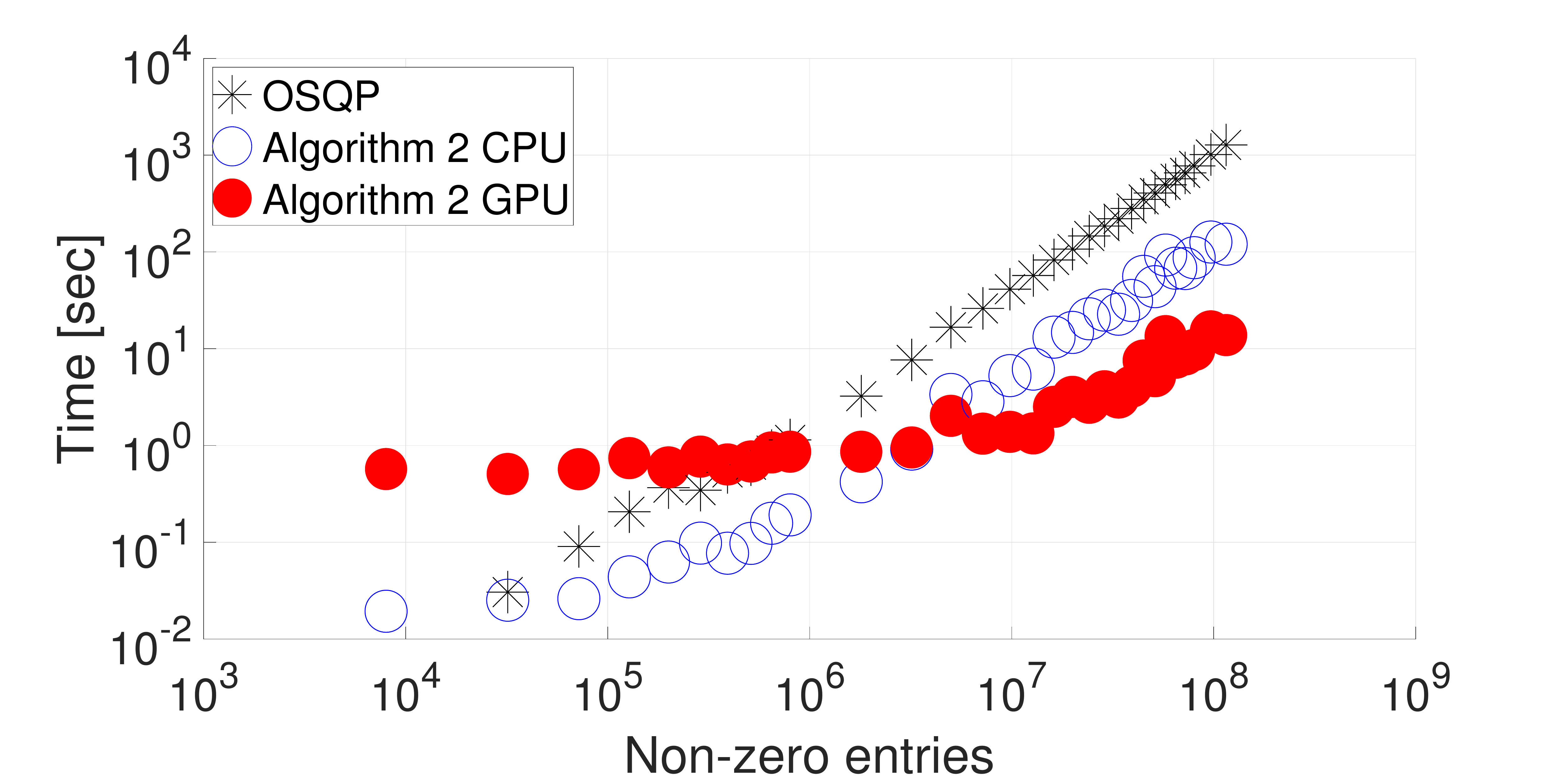}}
	\subfloat[\label{fig:lp_gurobi}] {\includegraphics[width =0.333\textwidth]{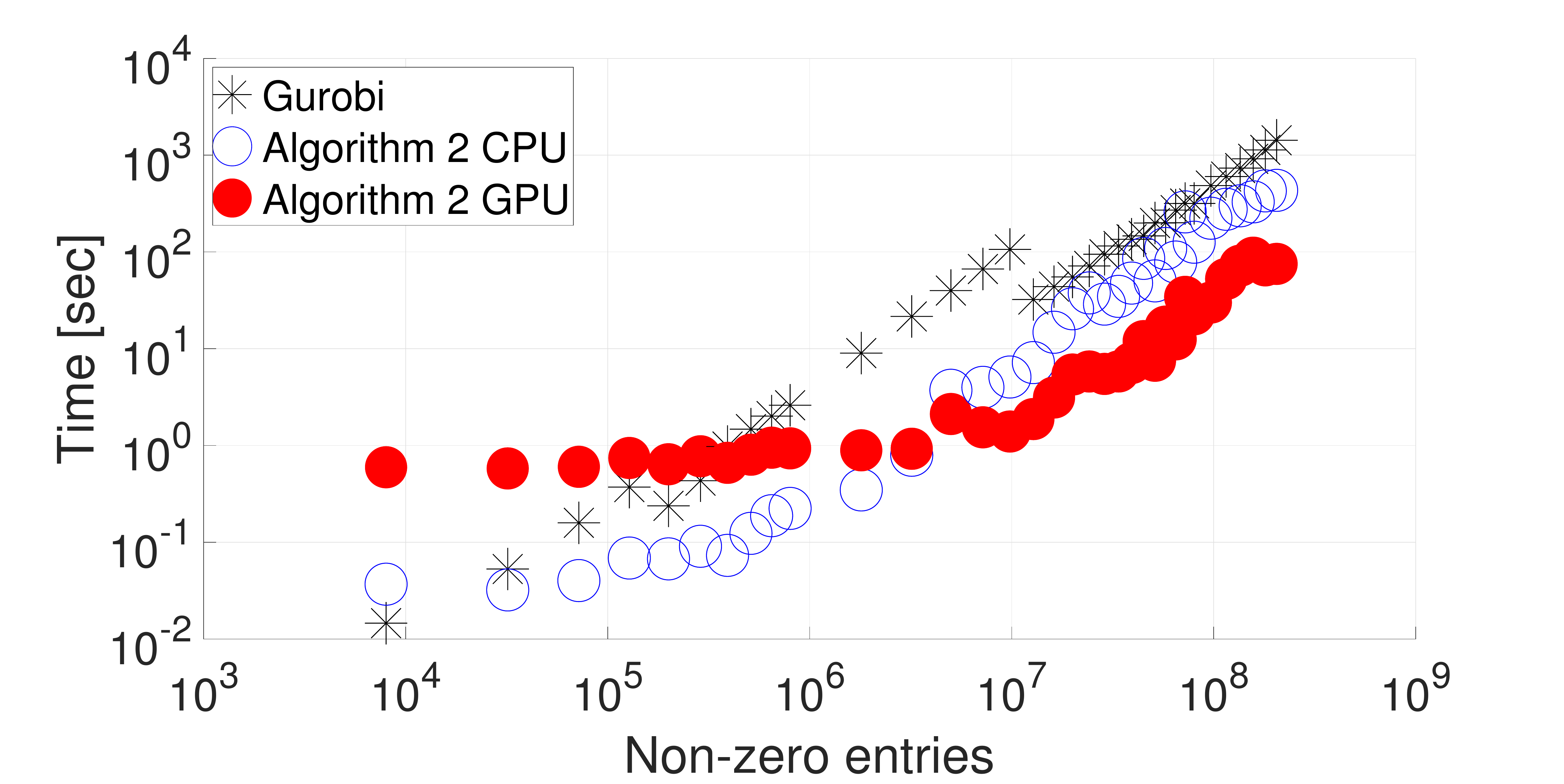}}
	\subfloat[\label{fig:lp_mosek}] {\includegraphics[width =0.333\textwidth]{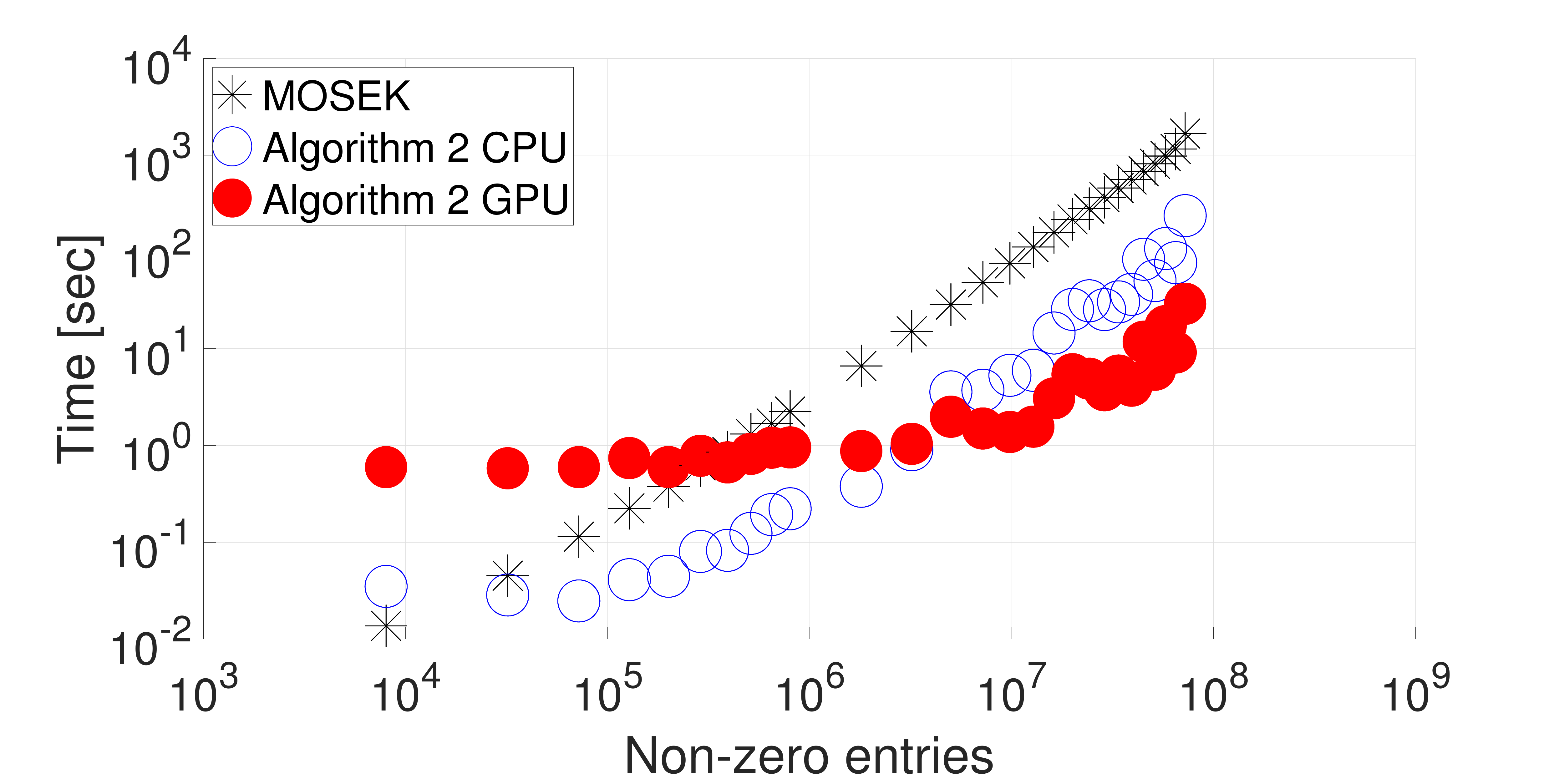}}
	\caption{The performance of Algorithm \ref{al:alg_O} for linear programming in comparison with (a) OSQP,  (b) GUROBI, and (c) MOSEK.}
	\label{fig:lp_computation_time}
\end{figure*}

\begin{figure*}[t] 
	\subfloat[\label{fig:lp_pogs1}] {\includegraphics[width =0.333\textwidth]{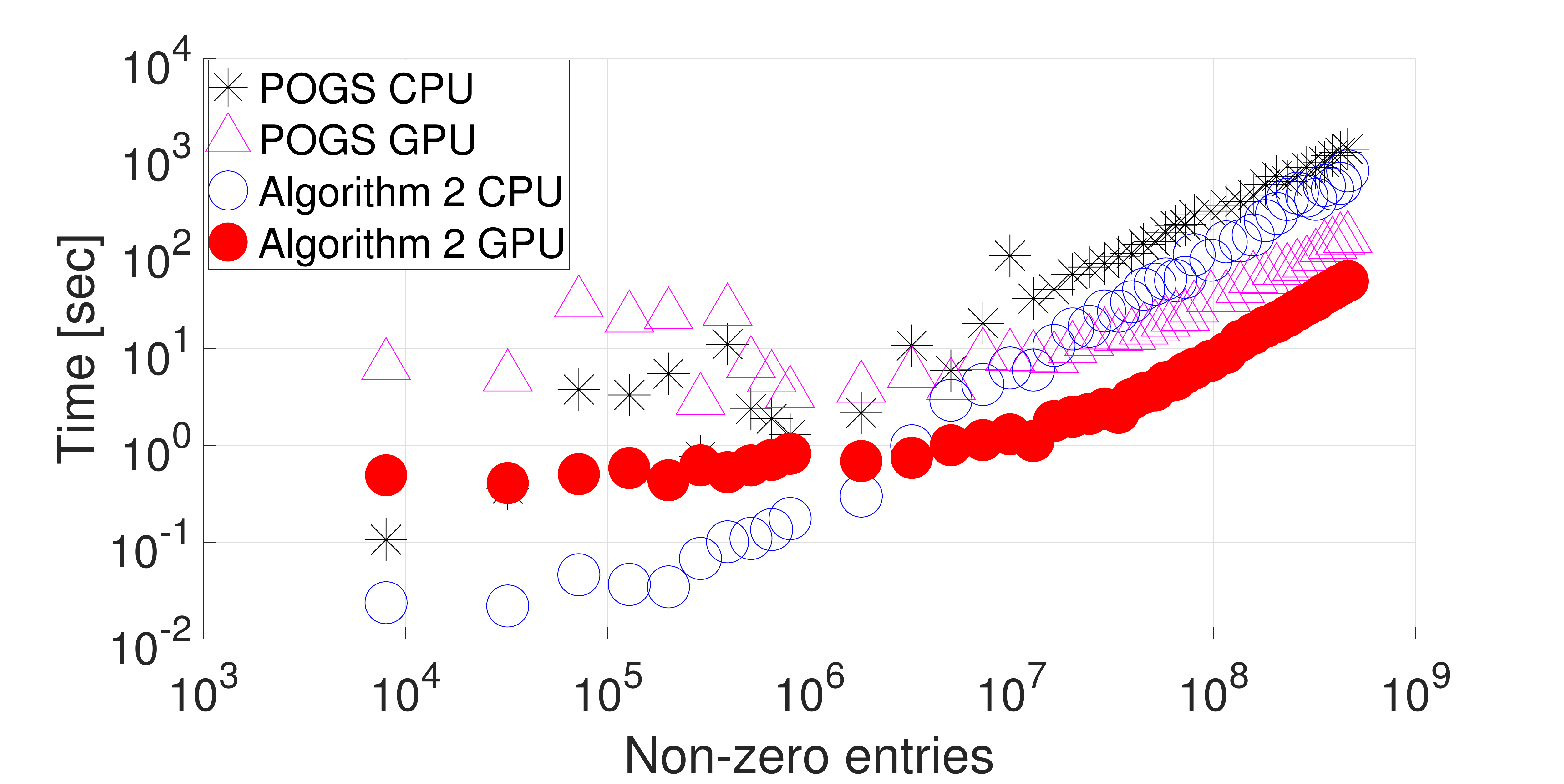}}
	\subfloat[\label{fig:lp_pogs2}] {\includegraphics[width =0.333\textwidth]{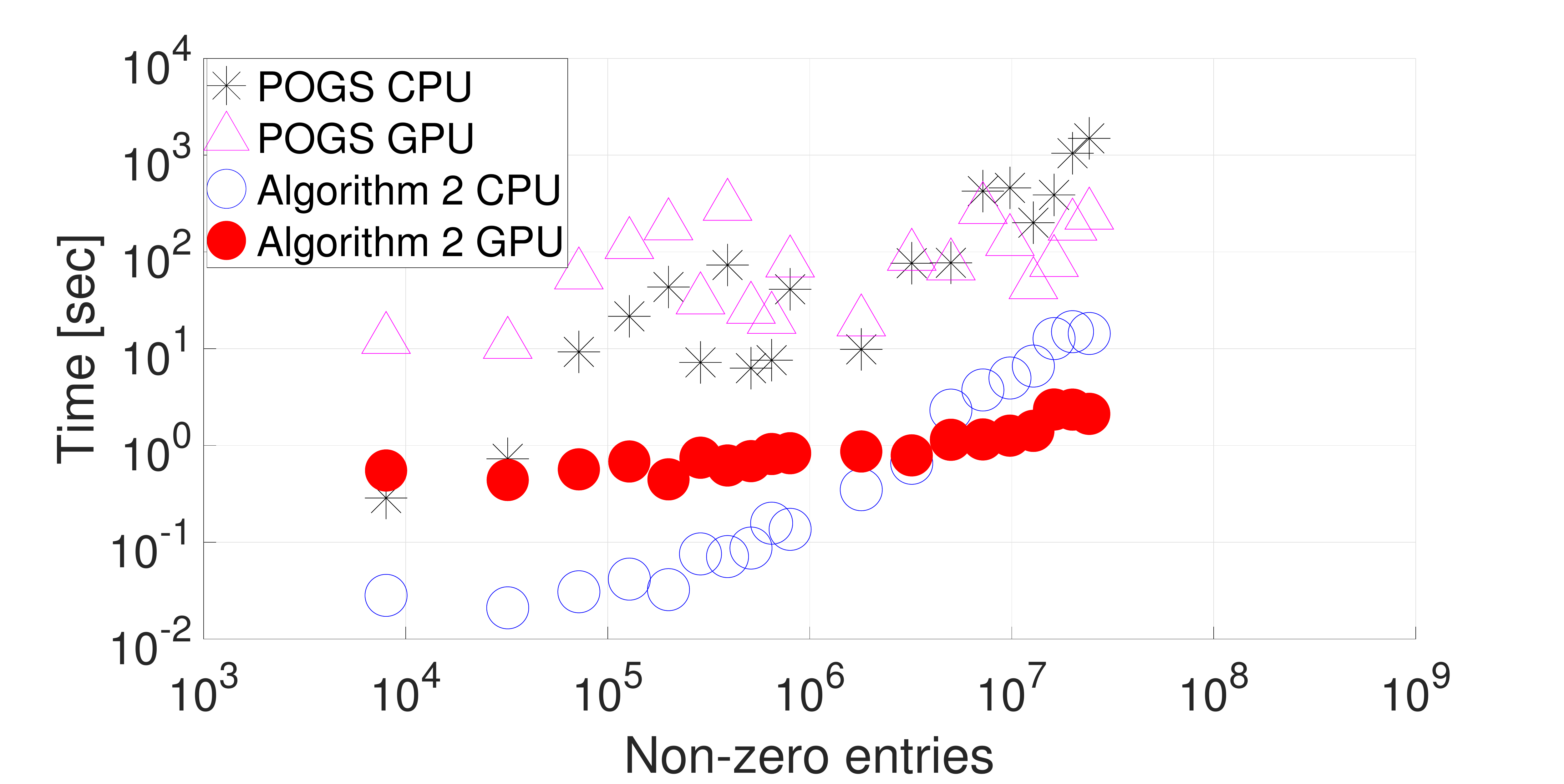}}
	\subfloat[\label{fig:lp_pogs3}] {\includegraphics[width =0.333\textwidth]{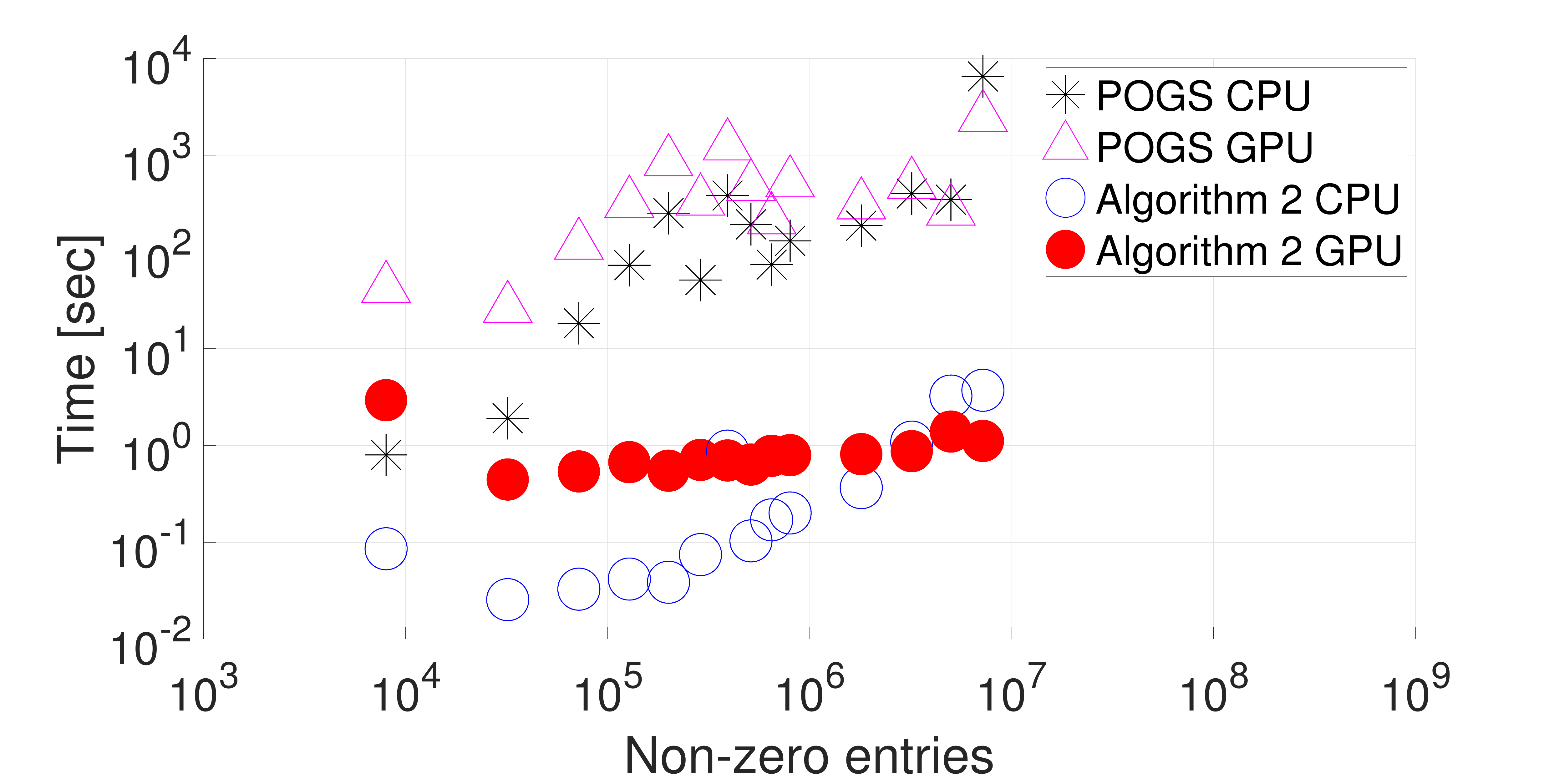}}
	\caption{The performance of Algorithm \ref{al:alg_O} for linear programming in comparison with POGS with the absolute and relative tolerances equal to (a) $\varepsilon^{\mathrm{abs}}=10^{-5}$, $\varepsilon^{\mathrm{rel}}=10^{-4}$, (b) $\varepsilon^{\mathrm{abs}}=10^{-6}$, $\varepsilon^{\mathrm{rel}}=10^{-5}$, and (c) $\varepsilon^{\mathrm{abs}}=10^{-7}$, $\varepsilon^{\mathrm{rel}}=10^{-6}$.}
	\label{fig:lp_pogs_time}
\end{figure*}

\subsection{Linear Programming}

\subsubsection{Comparisons with OSQP, Gurobi, and MOSEK}
This cases study is concerned with the class of linear programming problems. The performance of Algorithm \ref{al:alg_O} is {tested} in comparison with the solvers, OSQP, Gurobi, and MOSEK.  We have generated random LP instances with $n$ ranging from $100$ to $30000$, and $m = \lfloor0.8n\rfloor$. The number of nonzero elements of $\Abf$ ranges from $10^4$ to $10^9$. The data is generated as follows:
\begin{itemize}
	\item The elements of $\Abf\in\Rbb^{m\times n}$ have i.i.d uniform distribution from the interval $[-1,1]$.
	\item $\bbf: =\Abf|\dot{\xbf}|$ where the elements of $\dot{\xbf}\in\Rbb^{n}$ have i.i.d standard normal distribution.
	\item The elements of $\cbf\in\Rbb^n$ have i.i.d standard normal distribution.
	\item And $\Kcal=\Rbb_+^n$.
\end{itemize}
We start applying adaptive conditioning at iterations 300 and apply it {once again} in every 100 steps, i.e.,  
\begin{align}
\Lcal =\{300, 400, 500, \ldots\}.
\end{align}
Parameters $t$ and $\mu$ are set to 9.2 and 1, respectively.
The advantage of using GPU can be seen for large scale problems, when the problem size becomes larger, the GPU starts outperforming the other solvers significantly. The default settings are used for all three competing solvers and Algorithm \ref{al:alg_O} is terminated once a solution with better primal and dual feasibility is obtained, as defined in \eqref{stopcri}. As demonstrated in Figure \ref{fig:lp_computation_time}, for large instances, we have achieved {approximately 3.3 and 19 times improvements for CPU and GPU respectively,  in comparison with Gurobi}, and more than an order-of-magnitude time improvement in comparison with MOSEK and OSQP with their default settings. 

\begin{figure}[t] 
	\centering
	\includegraphics[width =0.333\textwidth]{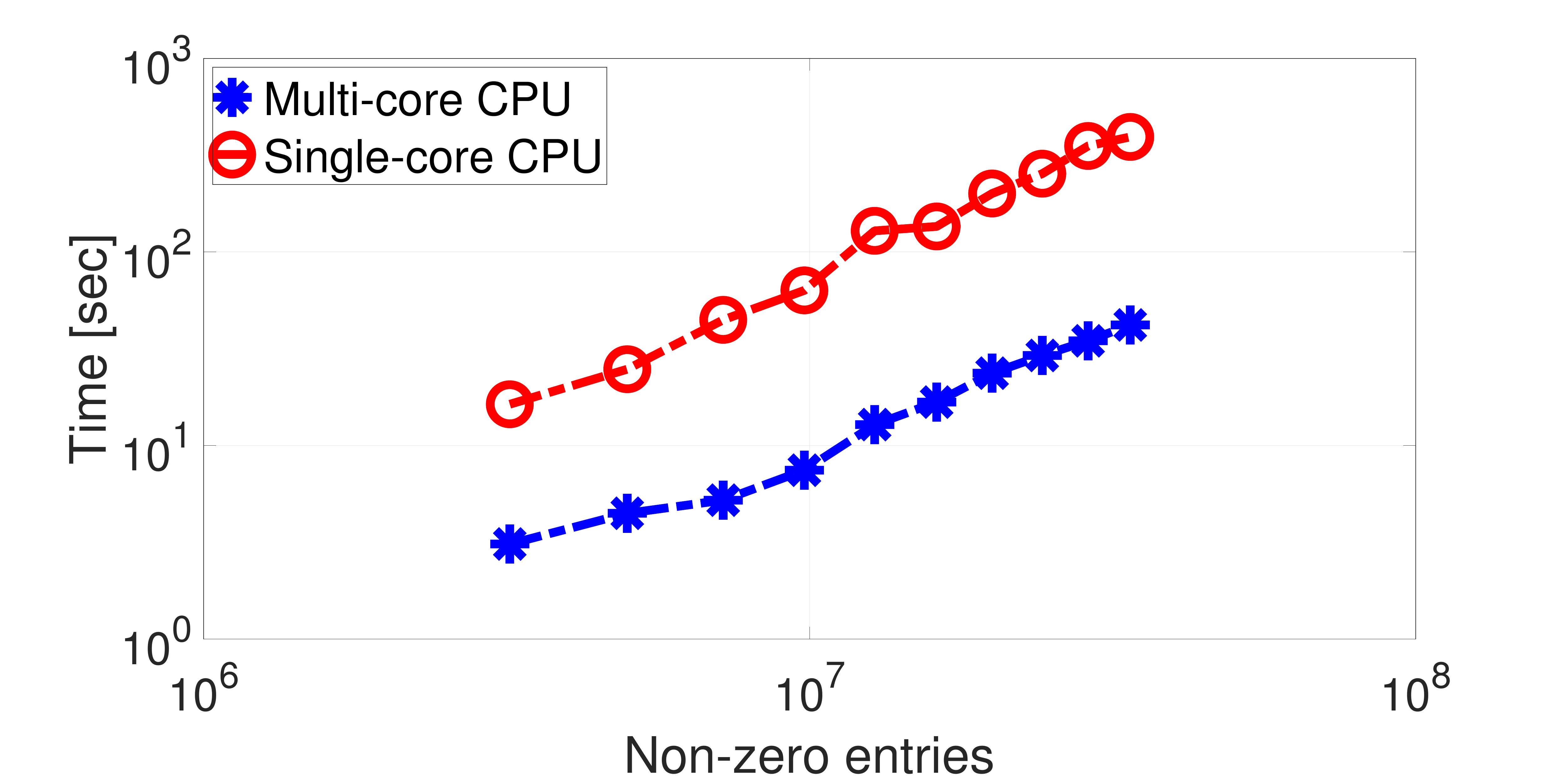}
	\caption{Single-core vs Multi-core CPU implementation}
	\label{fig:multi_core}
\end{figure}

{\subsection{Single vs Multi-core CPU Implementation}
	The iterative steps of Algorithms \ref{alg:1} and \ref{alg:O} are completely parallelizable. The parallel steps of algorithms are not only important for the graphics processing unit (GPU) implementation, but also provides the computational benefits for CPU implementation.  We demonstrate the parallel processing strength of the proposed algorithm  by solving  the previous instances of linear programming on both single-core and multi-core (20 cores) CPU settings. Figure \ref{fig:multi_core} shows that the multi-core CPU implementation is approximately 10 times faster than the single-core implementation. 
}

\subsection{Comparisons with POGS}
In this case study, we seek to demonstrate the ability of Algorithm \ref{al:alg_O} in finding very accurate solutions unlike competing first-order solvers that struggle with accuracy. In Figure \ref{fig:lp_pogs_time}, we perform comparisons between Algorithm \ref{al:alg_O} and the first-order solver POGS \cite{FB2018}. We use the default settings for POGS except for the absolute and relative tolerance values $\varepsilon^{\mathrm{abs}}$ and $\varepsilon^{\mathrm{rel}}$ for stopping criteria. Figure \ref{fig:lp_pogs_time} demonstrates that Algorithm \ref{al:alg_O} comprehensively outperforms one of the prominent first order solver, particularly with lower tolerance values. Similar to the previous experiment, the stopping criteria of Algorithm \eqref{al:alg_O} depends on the competing solver, as define in \eqref{stopcri}.

\begin{figure*}[t]
	\subfloat[\label{fig:socp_cone4_nonzero}]{ \includegraphics[width= 1.0\columnwidth, height= 0.25\textwidth]{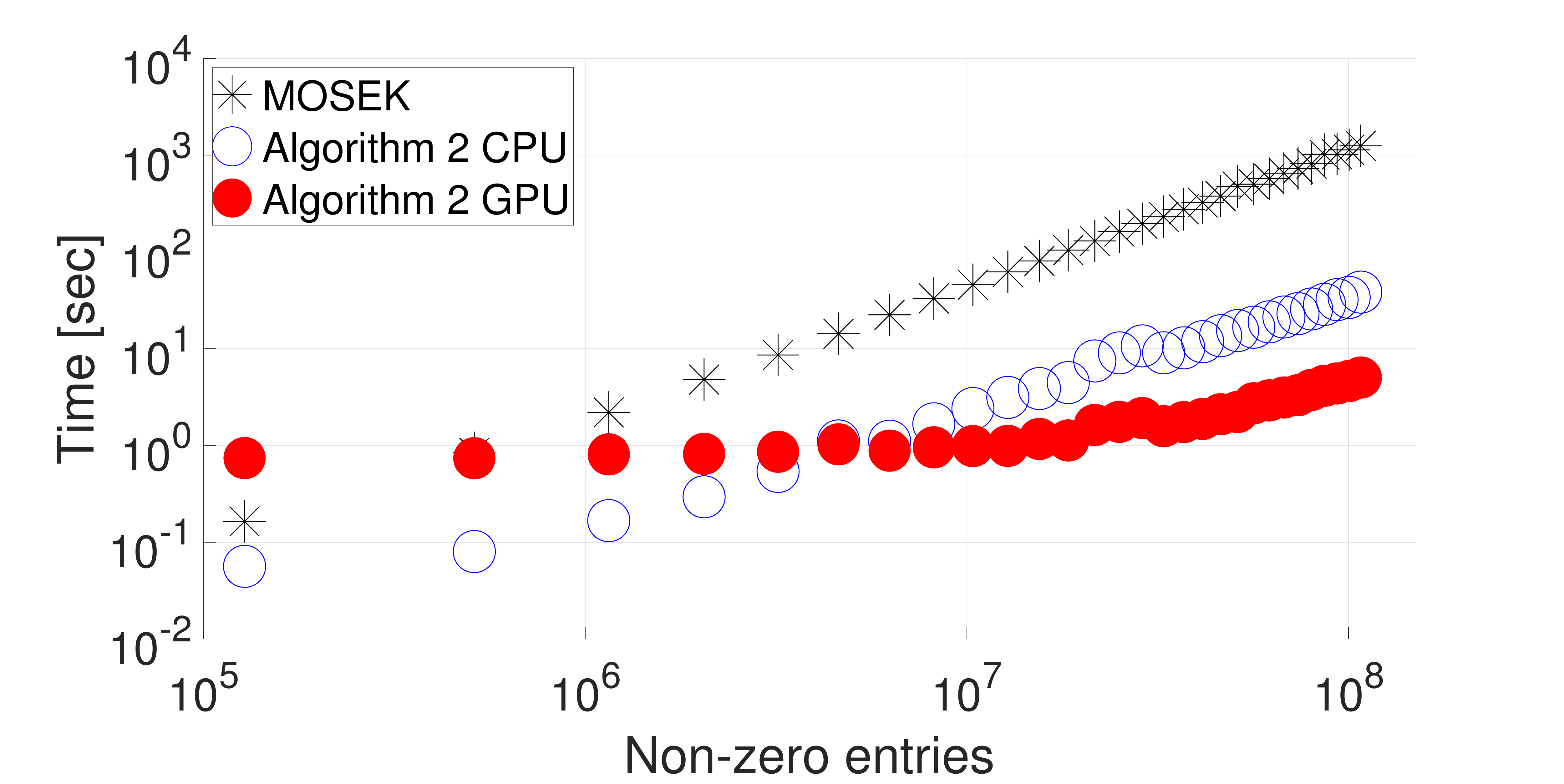}}
	\subfloat[\label{fig:socp_cone10_nonzero}]{\includegraphics[width= 1.0\columnwidth, height= 0.25\textwidth]{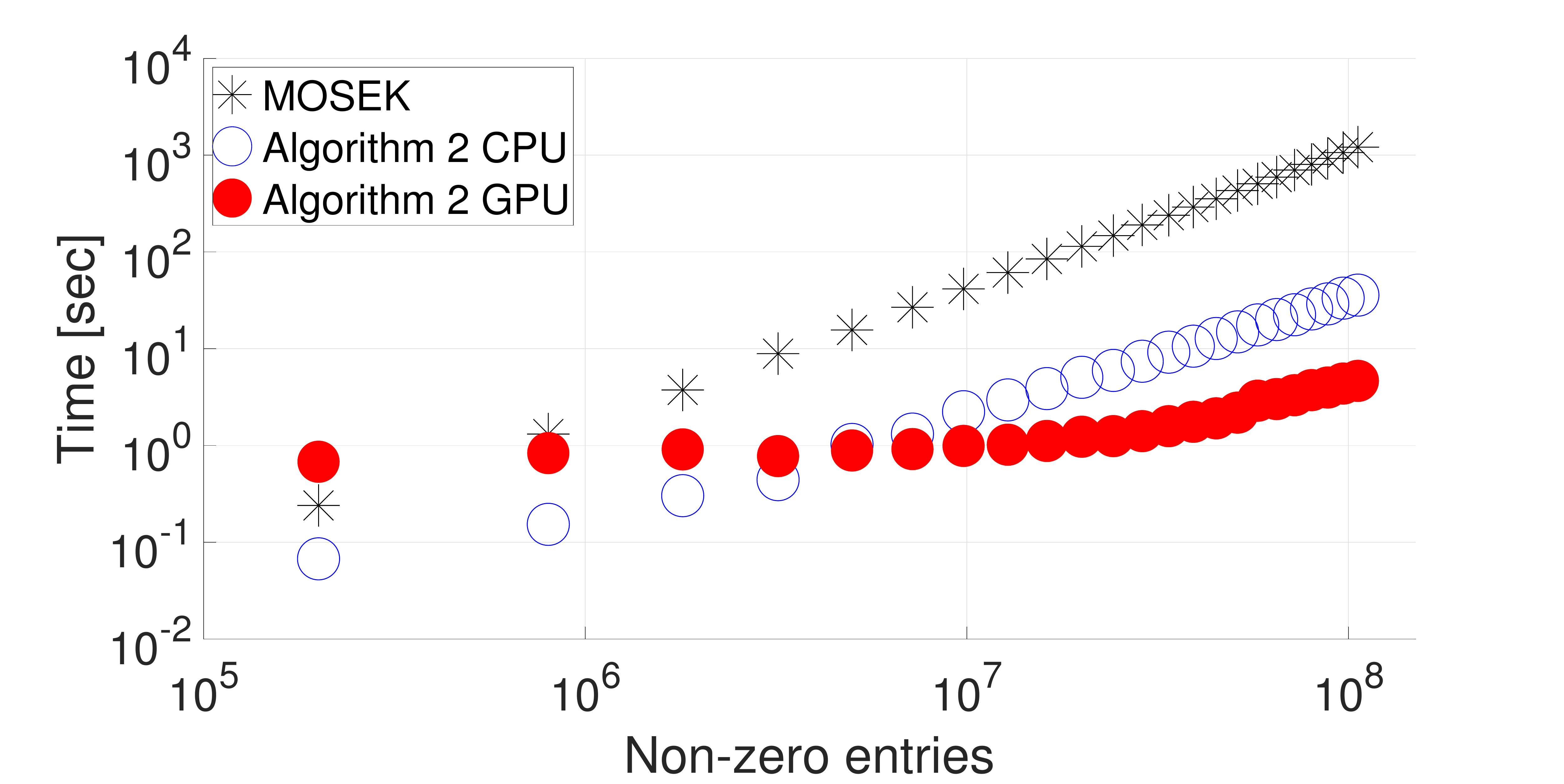}}
	\caption{The performance of Algorithm \ref{al:alg_O} for second order cone programming in comparison with MOSEK with Lorentz cones of size (a) $h=4$ and (b) $h =10$}
	\label{fig:socp_cone}
\end{figure*}

\subsection{Second-Order Cone Programming}

This {case} study is concerned with the class of second-order cone programming optimization problems. The performance of Algorithm \ref{al:alg_O} is tested in comparison with MOSEK on default settings.  We have generated random SOCP instances with $n$ ranging from $100$ to $2900$  {(MOSEK takes the maximum time of 1200 seconds)},  and $m = \lfloor0.8n\rfloor$: 
\begin{itemize}
	\item The elements of $\Abf\in\Rbb^{m\times n}$ have i.i.d uniform distribution from the interval $[-1,1]$.
	\item $\bbf: =\Abf\times\mathrm{abs}_{\Kcal}(\dot{\xbf})$ where the elements of $\dot{\xbf}\in\Rbb^{n}$ have i.i.d uniform distribution from the interval $[0,1]$.
	\item The elements of $\cbf\in\Rbb^n$ have i.i.d uniform distribution from the interval $[0,1]$.
	\item And $\Kcal=(\Kcal_h)^{\frac{n}{h}}$, where $\Kcal_h$ is the standard Lorentz cone of size $h$.
\end{itemize}
 We start applying adaptive conditioning at iterations {200} and apply it  {once again} in every 100 steps, i.e.,
\begin{align}
	\Lcal =\{200, 300, 400, \ldots\}.
\end{align}
Parameters $t$ and $\mu$ are set to 1.7 and 1, respectively.

The comparison of computational time for Lorentz cones of size $h=4$ and $h=10$ are reported in Figure \ref{fig:socp_cone}.  
It is clear from Figure \ref{fig:socp_cone} that Algorithm \eqref{al:alg_O} outperforms MOSEK by a large margins as the size of problem grows, 
while MOSEK performs better for smaller size problems. 

\section{Conclusions}\label{sec:concl}
We proposed a proximal numerical method with potential for parallelization. Next, an adaptive conditioning heuristic {was} developed to speed up the convergence of the proposed method. 
We provided a numerical example to demonstrate the fact that existing acceleration, parameter tuning and preconditioning methods have very limited effect on convergence behavior of first order methods. 
Moreover, we showed that convergence rate can be improved, irrespective of {the} condition number of data matrices.   
The proposed algorithm is implemented on {graphics processing unit} with an order-of-magnitude time improvement. 
A wide range of numerical experiments are conducted on large problems and results are compared with prominent first order solvers as well as the interior point method based solvers to {demonstrate} the claims made in this paper. 
We solved a variety of linear programs and second-order cone {programs. The} experimental results show that the proposed algorithm outperforms the first order algorithms in terms of computational time and achieves the accuracy levels comparable to second-order state-of-the-art methods.


	\bibliographystyle{IEEEtran}
	\bibliography{IEEEabrv,egbib_revised}

\appendix

In order to prove Theorem \ref{thm1}, we first give a few lemmas.

\begin{lemma}
Define the notation $|\cdot|_{\star}$ as
\begin{align}
	&|\cdot|_\star \triangleq\mathrm{abs}_{\Acal}(\mathrm{abs}_{\Kcal}(\cdot)).
\end{align}
Then for every $\ubf\in\Rbb^n$, we have
\begin{align}
	\||\ubf|_\star\|_2 = \|\ubf\|_2\label{norms}
\end{align}
and for every pair $\ubf,\vbf\in\Rbb^n$, we have
\begin{align}
	|\ubf|_\star^{\top} |\vbf|_\star\geq\ubf^{\top}\vbf.\label{normi}
\end{align}
\end{lemma}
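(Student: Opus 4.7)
My plan is to treat the two operators $\mathrm{abs}_{\Acal}$ and $\mathrm{abs}_{\Kcal}$ separately and exploit the very different structures they have. The set $\Acal = \mathrm{range}\{\Abf^{\!\top}\}$ is a linear subspace, so $\mathrm{proj}_{\Acal}$ is a linear orthogonal projection and therefore $\mathrm{abs}_{\Acal} = 2\mathrm{proj}_{\Acal} - \Ibf$ is a linear orthogonal transformation (a reflection through $\Acal$). In particular $\mathrm{abs}_{\Acal}$ preserves both the $\ell_2$ norm and, crucially, Euclidean inner products. This immediately reduces both claims to analogous statements about $\mathrm{abs}_{\Kcal}$ alone: one must show $\|\mathrm{abs}_{\Kcal}(\ubf)\|_2 = \|\ubf\|_2$ and $\mathrm{abs}_{\Kcal}(\ubf)^{\!\top}\mathrm{abs}_{\Kcal}(\vbf) \geq \ubf^{\!\top}\vbf$.

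For the statements about $\mathrm{abs}_{\Kcal}$ I would use the Moreau decomposition. Because $\Kcal$ is a product of Lorentz cones it is a self-dual proper cone, so every $\ubf \in \Rbb^n$ admits a unique decomposition
\begin{align*}
\ubf = \ubf_+ - \ubf_-, \qquad \ubf_+,\ubf_- \in \Kcal, \qquad \ubf_+^{\!\top}\ubf_- = 0,
\end{align*}
with $\ubf_+ = \mathrm{proj}_{\Kcal}(\ubf)$ and $\ubf_- = \mathrm{proj}_{\Kcal}(-\ubf)$. From Definition \ref{def:abs} it then follows that $\mathrm{abs}_{\Kcal}(\ubf) = 2\ubf_+ - \ubf = \ubf_+ + \ubf_-$. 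Identity \eqref{norms} is then just the Pythagorean theorem applied to the orthogonal pair $(\ubf_+,\ubf_-)$: both $\|\ubf\|_2^2$ and $\|\mathrm{abs}_{\Kcal}(\ubf)\|_2^2$ equal $\|\ubf_+\|_2^2 + \|\ubf_-\|_2^2$.

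For \eqref{normi}, let $\vbf = \vbf_+ - \vbf_-$ be the analogous decomposition. Expanding
\begin{align*}
\mathrm{abs}_{\Kcal}(\ubf)^{\!\top}\mathrm{abs}_{\Kcal}(\vbf) - \ubf^{\!\top}\vbf = (\ubf_+\! +\!\ubf_-)^{\!\top}(\vbf_+\! +\!\vbf_-) - (\ubf_+\! -\!\ubf_-)^{\!\top}(\vbf_+\! -\!\vbf_-) = 2\ubf_+^{\!\top}\vbf_- + 2\ubf_-^{\!\top}\vbf_+,
\end{align*}
and since $\ubf_+,\ubf_-,\vbf_+,\vbf_-$ all lie in the self-dual cone $\Kcal$, each of the cross inner products is non-negative, which proves the inequality. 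Combining this with the inner-product preservation of $\mathrm{abs}_{\Acal}$ gives $|\ubf|_\star^{\!\top}|\vbf|_\star = \mathrm{abs}_{\Kcal}(\ubf)^{\!\top}\mathrm{abs}_{\Kcal}(\vbf) \geq \ubf^{\!\top}\vbf$.

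The only nontrivial ingredient is the Moreau decomposition and the resulting orthogonality of $\ubf_+$ and $\ubf_-$; everything else is algebraic. Since $\Kcal$ is explicitly a Cartesian product of Lorentz cones, the decomposition can if needed be verified block by block on each factor, and self-duality $\Kcal = \Kcal^{\ast}$ is what makes the sign of the cross terms in \eqref{normi} work out cleanly. I would therefore expect the main expository care to lie in cleanly invoking self-duality of $\Kcal$ and in justifying the use of $\mathrm{abs}_{\Acal}$ as a linear reflection even though $\Acal$ is a subspace rather than a pointed proper cone, i.e.\ extending Definition \ref{def:abs} in the natural way so that $\mathrm{abs}_{\Acal}$ is an orthogonal linear involution.
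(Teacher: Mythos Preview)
Your argument is correct. The paper itself offers no substantive proof of this lemma beyond the single sentence ``The proof follows directly from the definition of $\mathrm{abs}_{\Acal}$ and $\mathrm{abs}_{\Kcal}$,'' so your write-up is precisely the natural unpacking of that assertion: using that $\mathrm{abs}_{\Acal}=2\,\mathrm{proj}_{\Acal}-\Ibf$ is an orthogonal reflection (hence preserves norms and inner products) to reduce to $\mathrm{abs}_{\Kcal}$, and then invoking the Moreau decomposition together with self-duality of the Lorentz cone to handle both \eqref{norms} and \eqref{normi}.
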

\begin{proof}
The proof follows directly from the definition of  $\mathrm{abs}_{\Acal}$ and $\mathrm{abs}_{\Kcal}$.
\end{proof}

\begin{lemma}
Let $\xbf^{\mathrm{opt}}$ and $\zbf^{\mathrm{opt}}$ denote a pair of primal and dual solutions for problems \eqref{eq:prob_primal} and \eqref{eq:prob_dual}. Then
	\begin{align}
		&\sbf^{\mathrm{opt}} \triangleq \xbf^{\mathrm{opt}}-\mu\zbf^{\mathrm{opt}}
	\end{align}
is a fixed point of Algorithm \eqref{al:alg_1} and 
\begin{align}
\dbf=\dfrac{\sbf^{\mathrm{opt}}+|\sbf^{\mathrm{opt}}|_\star}{2}.\label{dddd}
\end{align}
\end{lemma}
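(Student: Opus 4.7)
The plan is to verify the identity \eqref{dddd} via an orthogonal decomposition of $\Rbb^n$ with respect to $\Acal$, and then use the algorithm's recursion to confirm the fixed-point claim in a single line.

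The key preliminary step is to simplify $\mathrm{abs}_{\Kcal}(\sbf^{\mathrm{opt}})$. Complementary slackness $(\xbf^{\mathrm{opt}})^{\top}\zbf^{\mathrm{opt}} = 0$ together with self-duality of each Lorentz block $\Kcal_{n_i}$ implies, via Moreau's decomposition, that $\mathrm{proj}_{\Kcal}(\xbf^{\mathrm{opt}} - \mu\zbf^{\mathrm{opt}}) = \xbf^{\mathrm{opt}}$ for every $\mu > 0$. Hence $\mathrm{abs}_{\Kcal}(\sbf^{\mathrm{opt}}) = 2\xbf^{\mathrm{opt}} - \sbf^{\mathrm{opt}} = \xbf^{\mathrm{opt}} + \mu\zbf^{\mathrm{opt}}$, and the only remaining task is to evaluate $\mathrm{abs}_{\Acal}$ of this vector.

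Next, decompose $\Rbb^n = \Acal \oplus \Acal^{\perp}$, noting that $\Acal^{\perp} = \mathrm{null}(\Abf)$, and write $\xbf^{\mathrm{opt}} = \xbf^{\mathrm{opt}}_{\Acal} + \xbf^{\mathrm{opt}}_{\Acal^{\perp}}$ and similarly $\zbf^{\mathrm{opt}} = \zbf^{\mathrm{opt}}_{\Acal} + \zbf^{\mathrm{opt}}_{\Acal^{\perp}}$. Primal feasibility $\Abf\xbf^{\mathrm{opt}} = \bbf$ forces $\xbf^{\mathrm{opt}}_{\Acal} = \Abf^{\dagger}\bbf$. Dual feasibility $\cbf = \Abf^{\top}\ybf^{\mathrm{opt}} + \zbf^{\mathrm{opt}}$ with $\Abf^{\top}\ybf^{\mathrm{opt}} \in \Acal$ gives $\mathrm{proj}_{\Acal}(\cbf) = \Abf^{\top}\ybf^{\mathrm{opt}} + \zbf^{\mathrm{opt}}_{\Acal}$, so that a direct computation yields $\mathrm{abs}_{\Acal}(\cbf) - \cbf = -2\zbf^{\mathrm{opt}}_{\Acal^{\perp}}$. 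Substituting this into the definition of $\dbf$ in Algorithm \ref{al:alg_1} gives
\begin{equation*}
\dbf = \Abf^{\dagger}\bbf - \mu\zbf^{\mathrm{opt}}_{\Acal^{\perp}}.
\end{equation*}
A parallel computation on $\sbf^{\mathrm{opt}} + |\sbf^{\mathrm{opt}}|_{\star} = \sbf^{\mathrm{opt}} + \mathrm{abs}_{\Acal}(\xbf^{\mathrm{opt}} + \mu\zbf^{\mathrm{opt}})$ uses the same decomposition; the $\zbf^{\mathrm{opt}}_{\Acal}$ contributions cancel, leaving $2\xbf^{\mathrm{opt}}_{\Acal} - 2\mu\zbf^{\mathrm{opt}}_{\Acal^{\perp}}$, which is precisely $2\dbf$. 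This establishes \eqref{dddd}.

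Finally, the fixed-point property is a one-line verification: feeding $\sbf^{\mathrm{opt}}$ into the recursion of Algorithm \ref{al:alg_1} yields
\begin{equation*}
\tfrac{1}{2}\sbf^{\mathrm{opt}} - \tfrac{1}{2}|\sbf^{\mathrm{opt}}|_{\star} + \dbf = \tfrac{1}{2}\sbf^{\mathrm{opt}} - \tfrac{1}{2}|\sbf^{\mathrm{opt}}|_{\star} + \tfrac{1}{2}(\sbf^{\mathrm{opt}} + |\sbf^{\mathrm{opt}}|_{\star}) = \sbf^{\mathrm{opt}}.
\end{equation*}
The only subtle step is Moreau's identity for the product-of-Lorentz-cone structure, where self-duality of each $\Kcal_{n_i}$ and block-wise complementarity must be invoked to conclude $\mathrm{proj}_{\Kcal}(\xbf^{\mathrm{opt}} - \mu\zbf^{\mathrm{opt}}) = \xbf^{\mathrm{opt}}$; once this is in hand, the remainder is linear algebra on $\Acal$ and $\Acal^{\perp}$.
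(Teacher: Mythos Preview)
Your proof is correct and follows essentially the same approach as the paper's own proof: both first use complementary slackness and self-duality of $\Kcal$ to obtain $\mathrm{abs}_{\Kcal}(\sbf^{\mathrm{opt}})=\xbf^{\mathrm{opt}}+\mu\zbf^{\mathrm{opt}}$, and then reduce the identity for $\dbf$ to linear algebra on $\Acal=\mathrm{range}\{\Abf^{\!\top}\}$ and its orthogonal complement. The only cosmetic difference is that you organize the computation via an explicit $\Acal\oplus\Acal^{\perp}$ decomposition, whereas the paper carries out the equivalent manipulation directly with the projector $\Abf^{\dagger}\Abf$ in a single chain of equalities; the fixed-point verification is identical.
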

\begin{proof}
According to the Karush–Kuhn–Tucker (KKT) optimality conditions, there exists $\ybf ^{\mathrm{opt}}\in\Rbb^m$, for which
\begin{subequations}
	\begin{align}
		&\cbf-\zbf ^{\mathrm{opt}} - \Abf^{\top}\ybf ^{\mathrm{opt}} =0\label{kkt1}\\
		&\Abf \xbf^{\mathrm{opt}}  = \bbf\label{kkt2}\\
		&(\xbf^{\mathrm{opt}})^{\top}\zbf^{\mathrm{opt}}=0,\qquad\xbf ^{\mathrm{opt}}\in \Kcal,\qquad 
		\zbf ^{\mathrm{opt}}\in \Kcal.\label{kkt3}
	\end{align}
\end{subequations}
Define
\begin{align}
&\pbf^{\mathrm{opt}} \triangleq \xbf^{\mathrm{opt}}+\mu\zbf^{\mathrm{opt}},
\end{align}
then according to \eqref{kkt3}, we have:
\begin{align}
\pbf^{\mathrm{opt}}=\mathrm{abs}_{\Kcal}(\sbf^{\mathrm{opt}}).\label{abs}
\end{align}
Hence
\begin{subequations}
\begin{align}
&\!\!\dbf-\dfrac{\sbf^{\mathrm{opt}}\!+\!
	|\sbf^{\mathrm{opt}}|_\star}{2}=\\
&\!\!\Abf^{\dagger}\bbf+\! \dfrac{\mu(\mathrm{abs}_{\Acal}(\cbf)\!-\!\cbf)}{2}\!-\!\dfrac{\sbf^{\mathrm{opt}}\!+\!
|\sbf^{\mathrm{opt}}|_\star}{2}\overset{\eqref{abs}}{=}\\
&\!\!\Abf^{\dagger}\bbf+ \dfrac{\mu(\mathrm{abs}_{\Acal}(\cbf)\!-\!\cbf)}{2}-\dfrac{\sbf^{\mathrm{opt}}+
	\mathrm{abs}_{\Acal}(\pbf^{\mathrm{opt}})}{2}=\label{20c}\\
&\!\!\Abf^{\dagger}(\bbf-\Abf\xbf^{\mathrm{opt}})-
\mu(\cbf-\zbf^{\mathrm{opt}}-\Abf^{\dagger}\Abf(\cbf-\zbf^{\mathrm{opt}}))\overset{\eqref{kkt2}}{=}\\
&\!\!
\mu(\cbf-\zbf^{\mathrm{opt}}-\Abf^{\dagger}\Abf(\cbf-\zbf^{\mathrm{opt}}))\overset{\eqref{kkt1}}{=}\\
&\!\!
\mu(\cbf-\zbf^{\mathrm{opt}}+\Abf^{\dagger}\Abf\Abf^{\top}\ybf^{\mathrm{opt}})=\label{20h}\\
&\!\!
\mu(\cbf-\zbf^{\mathrm{opt}}+\Abf^{\top}\ybf^{\mathrm{opt}})\overset{\eqref{kkt1}}{=}\zerobf_n,\label{20i}
\end{align}
which concludes \eqref{dddd}. Now according to the steps of Algorithm \ref{al:alg_1}, one can immediately conclude that $\sbf^{\mathrm{opt}}$ is a fixed point. 
\end{subequations}
\end{proof}

\begin{lemma}\label{lm:sequence}
Let $\{\sbf^l\}^{\infty}_{l=0}$ be the sequence generated by Algorithm \eqref{al:alg_1} and define $\sbf^{\mathrm{opt}} \triangleq \xbf^{\mathrm{opt}}-\mu\zbf^{\mathrm{opt}}$, where $\xbf^{\mathrm{opt}}$ and $\zbf^{\mathrm{opt}}$ denote an arbitrary pair of primal and dual solutions for problems \eqref{eq:prob_primal} and \eqref{eq:prob_dual}.
Then,
\begin{itemize}
\item[a)] the sequence $\{\|\sbf^{l}-\sbf^{\mathrm{opt}}\|_2\}^{\infty}_{l=0}$ is convergent,
\item[b)] and the sequence $\{\|\sbf^{l+1}-\sbf^{l}\|_2\}^{\infty}_{l=0}$ converges to zero.
\end{itemize}
\end{lemma}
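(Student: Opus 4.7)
The plan is to recognize the iteration of Algorithm \ref{al:alg_1} as the fixed-point recursion $\sbf^{l+1}=T\sbf^l$ with $T\sbf:=\tfrac{1}{2}\sbf-\tfrac{1}{2}|\sbf|_\star+\dbf$, and then to establish the Fej\'er-type descent inequality
\begin{align*}
\|\sbf^{l+1}-\sbf^{\mathrm{opt}}\|_2^2+\|\sbf^{l+1}-\sbf^l\|_2^2\;\leq\;\|\sbf^l-\sbf^{\mathrm{opt}}\|_2^2.
\end{align*}
Once this inequality is in hand, part (a) is immediate because $\{\|\sbf^l-\sbf^{\mathrm{opt}}\|_2^2\}$ is nonincreasing and bounded below by zero, hence convergent. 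Part (b) follows by telescoping: summing over $l$ yields $\sum_{l=0}^{\infty}\|\sbf^{l+1}-\sbf^l\|_2^2\leq\|\sbf^0-\sbf^{\mathrm{opt}}\|_2^2<\infty$, so the summands must vanish in the limit.

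The technical core is therefore the descent inequality. I would first upgrade Lemma 1 into a full nonexpansiveness statement for the operator $|\cdot|_\star$: expanding
\begin{align*}
\||\sbf|_\star-|\sbf^{\mathrm{opt}}|_\star\|_2^2=\||\sbf|_\star\|_2^2-2\,|\sbf|_\star^{\top}|\sbf^{\mathrm{opt}}|_\star+\||\sbf^{\mathrm{opt}}|_\star\|_2^2
\end{align*}
and applying \eqref{norms} to the two squared-norm terms together with \eqref{normi} to the inner product yields $\||\sbf|_\star-|\sbf^{\mathrm{opt}}|_\star\|_2\leq\|\sbf-\sbf^{\mathrm{opt}}\|_2$. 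Next, using Lemma 2 to substitute $T\sbf^{\mathrm{opt}}=\sbf^{\mathrm{opt}}$ (equivalently $\dbf=\tfrac{1}{2}(\sbf^{\mathrm{opt}}+|\sbf^{\mathrm{opt}}|_\star)$), the two relevant differences decompose as $T\sbf-\sbf^{\mathrm{opt}}=\tfrac{1}{2}(\sbf-\sbf^{\mathrm{opt}})-\tfrac{1}{2}(|\sbf|_\star-|\sbf^{\mathrm{opt}}|_\star)$ and $\sbf-T\sbf=\tfrac{1}{2}(\sbf-\sbf^{\mathrm{opt}})+\tfrac{1}{2}(|\sbf|_\star-|\sbf^{\mathrm{opt}}|_\star)$. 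The parallelogram identity then gives
\begin{align*}
\|T\sbf-\sbf^{\mathrm{opt}}\|_2^2+\|\sbf-T\sbf\|_2^2=\tfrac{1}{2}\|\sbf-\sbf^{\mathrm{opt}}\|_2^2+\tfrac{1}{2}\||\sbf|_\star-|\sbf^{\mathrm{opt}}|_\star\|_2^2,
\end{align*}
and the nonexpansiveness of $|\cdot|_\star$ bounds the right-hand side by $\|\sbf-\sbf^{\mathrm{opt}}\|_2^2$. Specializing to $\sbf=\sbf^l$ yields the desired inequality, since $\sbf^{l+1}-\sbf^l=-(\sbf^l-T\sbf^l)$.

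The affine shift $\dbf$ plays no active role in the analysis because it cancels in every difference of the form $T\sbf-T\sbf^{\mathrm{opt}}$, so the entire argument rests on just the two structural properties of $|\cdot|_\star$ furnished by Lemma 1 and the fixed-point characterization in Lemma 2. I expect the only mildly delicate point to be recognizing that \eqref{norms} and \eqref{normi} together are exactly what is needed to pass from norm-preservation plus a cross-term inequality to nonexpansiveness of $|\cdot|_\star$; beyond that, the argument is the standard Fej\'er-monotonicity machinery for averaged operators, specialized to a single iterate and closed out by monotone convergence and telescoping.
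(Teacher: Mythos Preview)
Your proposal is correct and follows essentially the same route as the paper: the paper likewise uses Lemma~2 to write $\sbf^{l+1}-\sbf^{\mathrm{opt}}=\tfrac{1}{2}(\sbf^l-\sbf^{\mathrm{opt}})-\tfrac{1}{2}(|\sbf^l|_\star-|\sbf^{\mathrm{opt}}|_\star)$ and $\sbf^{l+1}-\sbf^{l}=-\tfrac{1}{2}(\sbf^l-\sbf^{\mathrm{opt}})-\tfrac{1}{2}(|\sbf^l|_\star-|\sbf^{\mathrm{opt}}|_\star)$, applies the parallelogram identity, and then invokes \eqref{norms} and \eqref{normi} to obtain the Fej\'er descent inequality, finishing with the same monotone-convergence and telescoping arguments you describe. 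The only cosmetic difference is that you isolate the nonexpansiveness of $|\cdot|_\star$ as a named intermediate step, whereas the paper folds that computation directly into the chain of equalities.
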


\begin{proof}
According to the steps of Algorithm \ref{al:alg_1}, we have
\begin{align}
&\sbf^{l+1}=\dfrac{\sbf^l-|\sbf^l|_\star}{2} + \dbf\phantom{\Big|}
\end{align}
and due to \eqref{dddd}:
\begin{align}
&\sbf^{l+1}=\dfrac{\sbf^l+\sbf^{\mathrm{opt}}}{2} - \dfrac{|\sbf^l|_\star-|\sbf^{\mathrm{opt}}|_\star}{2}.\label{ssss}
\end{align}
Hence
\begin{subequations}\label{sequ}
\begin{align}
&\|\sbf^{l+1}-\sbf^{l}\|^2_2+\|\sbf^{l+1}-\sbf^{\mathrm{opt}}\|^2_2  - \|\sbf^{l}-\sbf^{\mathrm{opt}}\|^2_2\overset{\eqref{ssss}}{=}\\
&\|\dfrac{\sbf^l-\sbf^{\mathrm{opt}}}{2} + \dfrac{|\sbf^l|_\star-|\sbf^{\mathrm{opt}}|_\star}{2}\|^2_2 +\\ 
&\|\dfrac{\sbf^l-\sbf^{\mathrm{opt}}}{2} - \dfrac{|\sbf^l|_\star-|\sbf^{\mathrm{opt}}|_\star}{2}\|^2_2
- \|\sbf^{l}-\sbf^{\mathrm{opt}}\|^2_2=\\
&\dfrac{\||\sbf^l|_\star-|\sbf^{\mathrm{opt}}|_\star\|^2_2}{2}-\dfrac{\|\sbf^l-\sbf^{\mathrm{opt}}\|^2_2}{2}\overset{\eqref{norms}}{=}\\
&(\sbf^l)^{\top}\sbf^{\mathrm{opt}}-|\sbf^l|_\star^{\top}|\sbf^{\mathrm{opt}}|_\star\overset{\eqref{normi}}{\leq} 0,
\end{align}
\end{subequations}
which concludes that $\{\|\sbf^{l}-\sbf^{\mathrm{opt}}\|_2\}^{\infty}_{l=0}$ is nonincreasing and convergent. Additionally, \eqref{sequ} concludes that 
\begin{align}
	\sum_{l=0}^{\infty}\|\sbf^{l+1}-\sbf^{l}\|^2_2\;\leq\;\|\sbf^{0}-\sbf^{\mathrm{opt}}\|^2_2\label{series}
\end{align}
which means that $\{\|\sbf^{l+1}-\sbf^{l}\|_2\}^{\infty}_{l=0}$ converges to zero.
\end{proof}

\begin{proof}[Proof of theorem \ref{thm1}]
According to the first part of  Lemma \ref{lm:sequence}, the sequence $\{\sbf^l\}^{\infty}_{l=0}$ is bounded and therefore, it has a convergent subsequence $\{\bar{\sbf}_l\}^{\infty}_{l=0}$, where
\begin{align}
	\lim_{l\to\infty}{\bar{\sbf}_l}=\bar{\sbf}.\label{limlim}
\end{align}
Define
\begin{align}
\bar{\xbf}\triangleq \dfrac{\mathrm{abs}_{\Kcal}(\bar{\sbf})+\bar{\sbf}}{2}
\qquad\mathrm{and}\qquad
\bar{\zbf}\triangleq\dfrac{\mathrm{abs}_{\Kcal}(\bar{\sbf})-\bar{\sbf}}{2\mu}.\label{defxbar}
\end{align}
In order to show that $\bar{\xbf}$ and $\bar{\zbf}$ are a pair of primal and dual solutions, we prove the following KKT optimality criteria:
\begin{subequations}
	\begin{align}
		&\bar{\zbf } - \cbf \in\mathrm{range}\{\Abf^{\top}\}\label{kktt1}\\
		&\Abf \bar{\xbf}  = \bbf\label{kktt2}\\
		&\bar{\xbf}^{\top}\bar{\zbf}=0,\qquad\bar{\xbf}\in \Kcal,\qquad\bar{\zbf }\in \Kcal. \label{kktt3}
	\end{align}
\end{subequations}
Condition \eqref{kktt3} follows directly from the definition \eqref{defxbar}. Additionally, according to the second part of Lemma \ref{lm:sequence},
\begin{subequations}
\begin{align}
&\zerobf_n=\lim_{l\to\infty}{\sbf^{l+1}-\sbf^{l}}\\
&\phantom{\zerobf_n}=\lim_{l\to\infty}{ \dbf-\dfrac{\sbf^l+|\sbf^l|_\star}{2}}\\
&\phantom{\zerobf_n}=\lim_{l\to\infty}{ \dbf-\dfrac{\bar{\sbf}^l+|\bar{\sbf}^l|_\star}{2}}
\overset{\eqref{limlim}}{=}\dbf-\dfrac{\bar{\sbf}+|\bar{\sbf}|_\star}{2}
\end{align}
\end{subequations}
Hence,
\begin{align}
\dfrac{\bar{\sbf}+|\bar{\sbf}|_\star}{2}=\dbf
\overset{\eqref{dddd}}{=}\dfrac{\sbf^{\mathrm{opt}}+|\sbf^{\mathrm{opt}}|_\star}{2}
\end{align}
Therefore,
\begin{subequations}
\begin{align}
&\!\!\!\zerobf_n\!=\dfrac{\bar{\sbf}+|\bar{\sbf}|_\star}{2}-\dfrac{\sbf^{\mathrm{opt}}+|\sbf^{\mathrm{opt}}|_\star}{2}\\
&\!\!\!\phantom{\zerobf_n}\!=\dfrac{\bar{\sbf}-\sbf^{\mathrm{opt}}\!+\!(2\Abf^{\dagger}\Abf\!-\!\Ibf_n)(\mathrm{abs}_{\Kcal}(\bar{\sbf})-\mathrm{abs}_{\Kcal}(\sbf^{\mathrm{opt}}))}{2}\!\!\\
&\!\!\!\phantom{\zerobf_n}\!=\dfrac{(\bar{\xbf}-\xbf^{\mathrm{opt}})-\mu(\bar{\zbf}-\zbf^{\mathrm{opt}})}{2}\nonumber\\
&\!\!\!\phantom{\zerobf_n}\qquad+\dfrac{(2\Abf^{\dagger}\Abf-\Ibf_n)[(\bar{\xbf}-\xbf^{\mathrm{opt}})+\mu(\bar{\zbf}-\zbf^{\mathrm{opt}})]}{2}\\
&\!\!\!\phantom{\zerobf_n}\!=\Abf^{\dagger}(\Abf\bar{\xbf}-\Abf\xbf^{\mathrm{opt}})-\mu(\Ibf_n-\Abf^{\dagger}\Abf)(\bar{\zbf}-\zbf^{\mathrm{opt}}).
\end{align}
\end{subequations}
Now, pre-multiplication by $\Abf$ concludes that
\begin{align}
\Abf\bar{\xbf}=\Abf\xbf^{\mathrm{opt}}=\bbf.
\end{align}
Similarly,
\begin{subequations}
\begin{align}
&\!\!\!\zerobf_n\!=\dfrac{\bar{\sbf}+|\bar{\sbf}|_\star}{2}-\dbf\\
&\!\!\!\phantom{\zerobf_n}\!=\Abf^{\dagger}(\Abf\bar{\xbf}-\bbf)-
\mu(\Ibf_n-\Abf^{\dagger}\Abf)(\bar{\zbf}-\cbf)\\
&\!\!\!\phantom{\zerobf_n}\!=
\mu(\Ibf_n-\Abf^{\dagger}\Abf)(\cbf-\bar{\zbf})
\end{align}
\end{subequations}
which concludes \eqref{kktt1}. Therefore $\bar{\xbf}$ and $\bar{\zbf}$ are primal and dual optimal, and according to the first part of Lemma \ref{lm:sequence}, the following limit exists:
\begin{align}
\lim_{l\to\infty}\|\sbf^{l}-\bar{\sbf}\|_2=\lim_{l\to\infty}\|\bar{\sbf}^{l}-\bar{\sbf}\|_2=0
\end{align}
which completes the proof.
\end{proof}

\end{document}